\newcommand{\8}{\infty}
\newcommand{\R}{\mathbb{R}}
\newcommand{\C}{\mathbb{C}}
\newcommand{\N}{\mathbb{N}}
\newcommand{\D}{\mathbb{D}}
\newcommand{\dil}{\mathrm{dil}}
\newcommand{\spa}{\mathrm{span}}
\newcommand{\spac}{\overline{\mathrm{span}}}
\newcommand{\Ker}{\mathrm{Ker~}}
\newcommand{\supp}{\mathrm{supp}}
\newcommand{\Co}{\mathcal{C}}
\newcommand{\Fo}{\mathcal{F}}
\newcommand{\Po}{\mathcal{P}}
\newcommand{\Lo}{\mathcal{L}}
\newcommand{\Ho}{\mathcal{H}}
\newcommand{\B}{\mathrm{B}}
\newcommand{\Bo}{\overline{\B}}
\newcommand{\1}{\mathds{1}}
\newcounter{erz}[section] \numberwithin{erz}{section}
\newtheorem{theorem}[erz]{Theorem}
\newtheorem{lemma}[erz]{Lemma}
\newtheorem{proposition}[erz]{Proposition}
\newtheorem{corollary}[erz]{Corollary}
\newtheorem{question}[erz]{Question}
\theoremstyle{remark}
\newtheorem{remark}[erz]{Remark}
\newtheorem{example}[erz]{Example}
\begin{document}
\title{Multiplier algebras of normed spaces of continuous functions}
\author{Eugene Bilokopytov\footnote{Email address bilokopi@myumanitoba.ca, erz888@gmail.com.}}
\maketitle

\begin{abstract}
In this article we investigate some general properties of the multiplier algebras of normed spaces of continuous functions (NSCF). In particular, we prove that the multiplier algebra inherits some of the properties of the NSCF. We show that it is often possible to construct NSCF's which only admit constant multipliers. In order to do that, using a method from \cite{mr}, we prove that any separable Banach space can be realized as a NSCF over any separable metrizable space. On the other hand, we give a sufficient condition for non-separability of a multiplier algebra.\medskip

\emph{Keywords:} Function Spaces, Multiplication Operators;

MSC2020 46E15, 46B28, 47B38
\end{abstract}

\section{Introduction}

Multiplier algebras constitute the basic class of examples of operator algebras, and gained a lot of attention recently. Mostly, such algebras are considered for Hilbert spaces of holomorphic functions, or Hilbert spaces of functions on sets without any further structure. In this paper we try to widen the scope of the subject by exploring some basic properties of the multiplier algebras of normed spaces of continuous functions.

First, let us define precisely what we mean by a normed space of continuous functions. Let $X$ be a topological space (a \emph{phase space}) and let $\Co\left(X\right)$ denote the space of all continuous complex-valued functions over $X$ endowed with the compact-open topology. A \emph{normed space of continuous functions} (NSCF) over $X$ is a linear subspace $\mathbf{F}\subset\Co\left(X\right)$ equipped with a norm that induces a topology, which is stronger than the compact-open topology, i.e. the inclusion operator $J_{\mathbf{F}}:\mathbf{F}\to\Co\left(X\right)$ is continuous, or equivalently the unit ball $\B_{\mathbf{F}}$ is bounded in $\Co\left(X\right)$. If $\mathbf{F}$ is a linear subspace of $\Co\left(X\right)$, then the \emph{point evaluation} at $x\in X$ on $\mathbf{F}$ is the linear functional $x_{\mathbf{F}}:\mathbf{F}\to\C$, defined by $x_{\mathbf{F}}\left(f\right)=f\left(x\right)$. If $\mathbf{F}$ is a NSCF, then all point evaluations are bounded on $\mathbf{F}$. Conversely, if $\mathbf{F}\subset\Co\left(X\right)$ is equipped with a complete norm such that $x_{\mathbf{F}}\in \mathbf{F}^{*}$, for every $x\in X$, then $\mathbf{F}$ is a NSCF. We will call a NSCF $\mathbf{F}$ over $X$ \emph{(weakly) compactly embedded} if $J_{\mathbf{F}}$ is a (weakly) compact operator, or equivalently, if $\B_{\mathbf{F}}$ is (weakly) relatively compact in $\Co\left(X\right)$. Clearly, every compactly embedded NSCF's is weakly compactly embedded. On the other hand, any reflexive NSCF is also weakly compactly embedded. By a Banach / Hilbert space of continuous functions (BSCF / HSCF) we mean a complete / Hilbert NSCF.

A \emph{multiplication operator} (MO) with \emph{symbol} $\omega:X\to\C $ is a linear map $M_{\omega}$ on the space $\Fo\left(X\right)$ of all complex-valued functions on $X$ defined by $$\left[M_{\omega}f\right]\left(x\right)=\omega\left(x\right)f\left(x\right),$$ for $x\in X$. Let $\mathbf{F}$ and $\mathbf{E}$ be NSCF's over $X$. If $M_{\omega}\mathbf{F}\subset\mathbf{E} $ and $\left.M_{\omega}\right|_{\mathbf{F}}\in\Lo\left(\mathbf{F},\mathbf{E}\right)$, then we say that $M_{\omega}$ is a multiplication operator from $\mathbf{F}$ into $\mathbf{E}$. If in this case $\mathbf{F}=\mathbf{E}$, then we will call $\omega$ a \emph{multiplier} of $\mathbf{F}$.\medskip

The collection of multipliers of a NSCF constitutes a NSCF of its own, which additionally is an algebra. In this article we investigate some general properties of this object. In particular, we prove that the multiplier algebra inherits some of the properties of the NSCF (see Proposition \ref{muw}, Theorem \ref{mus} and Proposition \ref{muw2}). One of the features of the multiplier algebras is that it is difficult to predict how ``large'' they are. We show that it is often possible to construct NSCF's which only admit constant multipliers (see Proposition \ref{anm} and Corollary \ref{anm2}). In order to do that, using a method from \cite{mr}, we prove that any separable Banach space can be realized as a NSCF over any separable metrizable space (see Theorem \ref{real}). On the other hand, we give a sufficient condition for non-separability of a multiplier algebra (see Theorem \ref{sep}).

Let us describe the structure of the article. In Section 2 we study the multiplier algebras of NSCF's independent of the topology of their phase spaces. In Section 3 we recall some basic facts about NSCF's and show that it is a very broad category. In Section 4 we bring the topology of the phase space back into the mix, and it enables us to construct various examples of NSCF's with no non-constant multipliers. Finally, in Section 5 we consider subalgebras generated by some finite collections of multipliers.\bigskip

Throughout the paper by $Id_{X}$ we mean the identity map on $X$, and $\1$ is the constant function $1$. For $Y\subset X$ the supremum semi-norm of $f:X\to\C$ is denoted by $\|f\|_{\8}^{Y}$; if $Y=X$ we simply use $\|f\|_{\8}$.

Before concluding this section with some concrete examples of NSCF's, let us mention a large class of compactly embedded NSCF's. If $X$ is a domain in $\C^{n}$, i.e. an open connected set, and $\mathbf{F}$ is a NSCF over $X$ that consists of holomorphic functions, then $\mathbf{F}$ is compactly embedded. Indeed, by Montel's theorem (see \cite[Theorem 1.4.31]{scheidemann}), $B_{\mathbf{F}}$ is relatively compact in $\Co\left(X\right)$, since it is a bounded set that consists of holomorphic functions. We will call such NSCF's \emph{normed spaces of holomorphic functions} (NSHF). The notions of BSHF and HSHF are defined analogously.

\begin{example}\label{wuh}
Assume that $X$ is a domain in $\C^{n}$ and let $u:X\to\left(0,+\8\right)$ be continuous. Define the \emph{weighted space of holomorphic functions}  $$\Ho_{u}^{\8}=\left\{f\in\Ho\left(X\right),~\|f\|_{u}^{\8}=\|uf\|_{\8}<+\8\right\}.$$ One can show that this is a BSHF over $X$ with respect to the norm $\|\cdot\|_{u}^{\8}$.  If $u= \1$ we will use the notation $\Ho_{\8}\left(X\right)$.

Assume additionally that $X$ is bounded. Let $A\left(X\right)$ be the closed subalgebra of $\Ho_{\8}\left(X\right)$ which consists of functions that admit a continuous extension on $\overline{X}$. Under some assumptions about $X$ (see e.g. \cite[Theorem 2.1]{range} and \cite{gn}) $A\left(X\right)$ is the closure of the polynomials with respect to $\|\cdot\|_{\8}$. Another natural way to represent this space is a NSCF over $\overline{X}$ that consists of functions holomorphic on $X$.\qed\end{example}\smallskip

\begin{example}\label{hardy}
For $p\in \left[1,+\8\right]$ the \emph{Hardy space} $\Ho^{p}$ is a BSHF over the (open) unit disk $\D\subset\C$ that consists of holomorphic functions $f$ with the norm defined by $$\|f\|^{p}=\sup\limits_{r\in\left[0,1\right)}\int\limits_{\partial \D}\left|f\left(re^{i\theta}\right)\right|^{p}d\theta.$$ One can show that if $p=2$ this is a Hilbert space; when $p=+\8$, we get $\Ho^{\8}=\Ho_{\8}\left(\D\right)$. It also follows from Holder's inequality that $\Ho^{p}\subset \Ho^{q}$, when $q\le p$. The Hardy space is among the most studied function spaces, and we refer to e.g. \cite{duren} for more information.
\qed\end{example}

Several additional examples of NSCF's will be introduced throughout the paper.

\section{Multipliers of a NSF}

In this section we perform the initial study of the multiplier algebra of a NSCF, but with no regard to the topology of its phase space. In order to do that we need to adjust the definition of NSCF. Everywhere in this section $X$ is a set. A \emph{normed space of functions} (NSF) over $X$ is a NSCF over $X$ endowed with the discrete topology. The notions of BSF and HSF are defined analogously. Clearly, if $\mathbf{F}$ is a NSCF over a topological space $Y$, it is a NSF over the set $Y$. Let us consider an important subclass of NSF's.

\begin{example}\label{rkhs}
For a HSF $\mathbf{H}$ over $X$ consider a function $K_{\mathbf{H}}:X\times X\to\C$ defined by $K_{\mathbf{H}}\left(x,y\right)=\left<x_{\mathbf{H}},y_{\mathbf{H}}\right>$. The principal property of $K_{\mathbf{H}}$ is that it is a \emph{(positive semi-definite) kernel}, i.e. for every $x_{1},...,x_{n}\in X$ the matrix $\left[K_{\mathbf{H}}\left(x_{i},x_{j}\right)\right]_{i,j=1}^{n}$ is positive semi-definite. Conversely, if $K:X\times X\to\C$ is a kernel, there is a unique HSF $\mathbf{H}_{K}$ over $X$ such that $K_{\mathbf{H}_{K}}=K$ (see \cite[Theorem 2.23]{am}). Because of this, the traditional term for what we call HSF is \emph{Reproducing Kernel Hilbert Space}.

It is easy to see that if $\omega:X\to\C$, then $\omega\otimes\overline{\omega}:X\times X\to\C$ defined by $\omega\otimes\overline{\omega}\left(x,y\right)=\omega\left(x\right)\overline{\omega\left(y\right)}$ is a kernel. One can show that kernels form a closed convex cone in $\Fo\left(X\times X\right)$. Since from Schur's product theorem an entrywise product of positive semi-definite matrices is a positive semi-definite matrix, it follows that a product of kernels is a kernel. In particular, if $K:X\times X\to\D$ is a kernel, then $L=\frac{1}{1-K}=\sum\limits_{n=0}^{+\8} K^{n}$ is also a kernel.  For example, the kernel of the Hardy space is the Szego kernel $K_{\Ho^{2}}\left(z,w\right)=\frac{1}{1-z\overline{w}}$  (see \cite[Example 2.9]{am}).
\qed\end{example}\medskip

Let us turn to the characterizations and some basic properties of the multiplication operators. The following is an immediate consequence of the Closed Graph Theorem.

\begin{proposition}\label{clogr}
Let $\mathbf{F}$ and $\mathbf{E}$ be BSF's over $X$ and let $\omega:X\to\C $ be such that $M_{\omega}\mathbf{F}\subset\mathbf{E}$. Then $M_{\omega}\in\Lo\left(\mathbf{F},\mathbf{E}\right)$. In particular, if $\mathbf{F}\subset\mathbf{E}$, then the inclusion operator is continuous, and if $\mathbf{F}=\mathbf{E}$ as sets, their norms are equivalent.
\end{proposition}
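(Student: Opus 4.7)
The plan is to apply the Closed Graph Theorem directly, using completeness of both spaces together with boundedness of point evaluations (which, as noted immediately after the definition of NSCF, is automatic for BSF's). Since $M_\omega$ is defined on all of $\mathbf{F}$ and maps into $\mathbf{E}$, it suffices to verify that its graph is closed.

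First I would take a sequence $\left(f_{n}\right)$ in $\mathbf{F}$ with $f_{n}\to f$ in $\mathbf{F}$ and $M_{\omega}f_{n}\to g$ in $\mathbf{E}$, and show $g=M_{\omega}f$. For any $x\in X$ the evaluations $x_{\mathbf{F}}\in\mathbf{F}^{*}$ and $x_{\mathbf{E}}\in\mathbf{E}^{*}$ give $f_{n}\left(x\right)\to f\left(x\right)$ and $\omega\left(x\right)f_{n}\left(x\right)=\left[M_{\omega}f_{n}\right]\left(x\right)\to g\left(x\right)$. On the other hand, convergence of $f_{n}\left(x\right)$ to $f\left(x\right)$ in $\C$ forces $\omega\left(x\right)f_{n}\left(x\right)\to\omega\left(x\right)f\left(x\right)$, so by uniqueness of limits in $\C$ we get $g\left(x\right)=\omega\left(x\right)f\left(x\right)=\left[M_{\omega}f\right]\left(x\right)$ for every $x\in X$. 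Hence $g=M_{\omega}f$, the graph is closed, and the CGT yields $M_{\omega}\in\Lo\left(\mathbf{F},\mathbf{E}\right)$.

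For the ``in particular'' clauses, if $\mathbf{F}\subset\mathbf{E}$ then taking $\omega=\1$ makes $M_{\omega}$ the inclusion $J:\mathbf{F}\to\mathbf{E}$, which is then bounded by the first part. If additionally $\mathbf{F}=\mathbf{E}$ as sets, applying this to the two inclusions $\mathbf{F}\hookrightarrow\mathbf{E}$ and $\mathbf{E}\hookrightarrow\mathbf{F}$ gives bounded identity maps in both directions, which is the definition of equivalent norms.

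There is essentially no obstacle here beyond correctly recognizing that the two hypotheses needed for the CGT---completeness of the source and target, and a way to identify pointwise with norm limits---are both furnished by the BSF structure. The only subtle point to stress is that closedness of the graph is not a statement about pointwise behavior of $\omega$ itself; rather it follows from the fact that norm convergence in \emph{both} $\mathbf{F}$ and $\mathbf{E}$ forces pointwise convergence, which pins down $g$ uniquely.
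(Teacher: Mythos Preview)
Your proof is correct and matches the paper's approach exactly: the paper simply states that the proposition ``is an immediate consequence of the Closed Graph Theorem'' without further elaboration, and you have supplied precisely the standard details of that argument.
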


For $Y\subset X$ define $\mathbf{F}_{Y}=\left\{f\in \mathbf{F}\left|\supp f\subset Y\right.\right\}=\left\{x_{\mathbf{F}}, x\in X\backslash Y\right\}^{\perp}$, which is a closed subspace of $\mathbf{F}$, and is also a NSF over $X$.

\begin{remark}\label{clogr2}
The first claim of the proposition still holds if $\mathbf{E}$ has a semi-norm such that there is $Y\subset X$ with $\mathbf{E}=\mathbf{E}_{Y}+\mathbf{E}_{X\backslash Y}$ and $\Ker \|\cdot\|=\mathbf{E}_{X\backslash Y}$.
\qed\end{remark}

Observe that $\Ker M_{\omega}=\left\{x_{\mathbf{F}}\left| \omega\left(x\right)\ne 0\right.\right\}^{\perp}=\mathbf{F}_{\omega^{-1}\left(0\right)}$ and $\overline{M_{\omega}\mathbf{F}}\subset\mathbf{E}_{X\backslash\omega^{-1}\left(0\right)}$. Related to this is the following characterization of MO's (cf. \cite{barb}).

\begin{proposition}\label{char}
Let $\mathbf{F}$ and $\mathbf{E}$ be NSF's over $X$ and let $T\in\Lo\left(\mathbf{F},\mathbf{E}\right)$. The following are equivalent:
\item[(i)] There is $\omega:X\to\C$ such that $T=M_{\omega}$;
\item[(ii)] $T\mathbf{F}_{Y}\subset \mathbf{E}_{Y}$, for every $Y\subset X$;
\item[(iii)] $T\mathbf{F}_{X\backslash \left\{x\right\}}\subset \mathbf{E}_{X\backslash \left\{x\right\}}$, for every $x\in X$;
\item[(iv)] $T^{*}x_{\mathbf{E}}\in \C x_{\mathbf{F}} $, for every $x\in X$.
\end{proposition}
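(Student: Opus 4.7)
The plan is to prove the four conditions equivalent by establishing the cycle (i) $\Rightarrow$ (ii) $\Rightarrow$ (iii) $\Rightarrow$ (iv) $\Rightarrow$ (i), three of which are essentially unpacking definitions, while the critical link between the ``subspace'' conditions (ii), (iii) and the ``functional'' condition (iv) uses a standard linear-algebra fact about nested kernels.

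The implication (i) $\Rightarrow$ (ii) is immediate: if $T=M_{\omega}$ and $f\in\mathbf{F}_{Y}$, then $f$ vanishes off $Y$, hence so does $M_{\omega}f$, and since $M_{\omega}f\in\mathbf{E}$ by hypothesis, $M_{\omega}f\in\mathbf{E}_{Y}$. The implication (ii) $\Rightarrow$ (iii) is just the specialization $Y=X\setminus\left\{x\right\}$.

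The key step is (iii) $\Rightarrow$ (iv). I would rewrite the subspaces $\mathbf{F}_{X\setminus\left\{x\right\}}$ and $\mathbf{E}_{X\setminus\left\{x\right\}}$ using the formula $\mathbf{F}_{Y}=\left\{y_{\mathbf{F}},\, y\in X\setminus Y\right\}^{\perp}$ recorded just before the proposition, yielding $\mathbf{F}_{X\setminus\left\{x\right\}}=\left\{x_{\mathbf{F}}\right\}^{\perp}=\Ker\, x_{\mathbf{F}}$ (and similarly for $\mathbf{E}$). The hypothesis $T\mathbf{F}_{X\setminus\left\{x\right\}}\subset\mathbf{E}_{X\setminus\left\{x\right\}}$ then reads: if $x_{\mathbf{F}}\left(f\right)=0$, then $x_{\mathbf{E}}\left(Tf\right)=\left(T^{*}x_{\mathbf{E}}\right)\left(f\right)=0$. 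Thus $\Ker\, x_{\mathbf{F}}\subset\Ker\, T^{*}x_{\mathbf{E}}$. The standard fact that one linear functional whose kernel contains the kernel of another must be a scalar multiple of the latter (trivially true when $x_{\mathbf{F}}=0$, since then both functionals are zero) gives $T^{*}x_{\mathbf{E}}\in\C x_{\mathbf{F}}$.

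Finally, for (iv) $\Rightarrow$ (i), I would define $\omega:X\to\C$ by choosing, for each $x\in X$, the scalar $\omega\left(x\right)$ such that $T^{*}x_{\mathbf{E}}=\omega\left(x\right)x_{\mathbf{F}}$; when $x_{\mathbf{F}}=0$ the choice is immaterial (both sides of the equation below vanish) so I set $\omega\left(x\right)=0$. Then for any $f\in\mathbf{F}$ and $x\in X$,
\[
\left(Tf\right)\left(x\right)=x_{\mathbf{E}}\left(Tf\right)=\left(T^{*}x_{\mathbf{E}}\right)\left(f\right)=\omega\left(x\right)x_{\mathbf{F}}\left(f\right)=\omega\left(x\right)f\left(x\right)=\left[M_{\omega}f\right]\left(x\right),
\]
so $T=M_{\omega}$. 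The only mild subtlety worth flagging is the trivial/degenerate case $x_{\mathbf{F}}=0$ in both the linear-algebra step and the definition of $\omega$; handling it transparently avoids a spurious case distinction.
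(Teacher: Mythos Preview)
Your proposal is correct and follows essentially the same route as the paper's proof: the same cycle (i)$\Rightarrow$(ii)$\Rightarrow$(iii)$\Rightarrow$(iv)$\Rightarrow$(i), with the crux at (iii)$\Rightarrow$(iv) handled via the identification $\mathbf{F}_{X\setminus\{x\}}=\Ker x_{\mathbf{F}}$ and the kernel-containment argument. The paper phrases that step in annihilator language ($T^{*}x_{\mathbf{E}}\in T^{*}\{x_{\mathbf{E}}\}^{\perp\perp}\subset\{x_{\mathbf{F}}\}^{\perp\perp}=\C x_{\mathbf{F}}$), but this is the same linear-algebra fact you invoke, and your explicit handling of the degenerate case $x_{\mathbf{F}}=0$ is a welcome addition.
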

\begin{proof}
The implications (i)$\Rightarrow$(ii)$\Rightarrow$(iii) are obvious, and (iv)$\Rightarrow$(i) is standard (see e.g. \cite[Proposition 2.4 and Corollary 2.5]{erz1}). Let us prove (iii)$\Rightarrow$(iv). Let $x\in X$. Since $\mathbf{F}_{X\backslash \left\{x\right\}}=\left\{x_{\mathbf{F}}\right\}^{\bot}$ in $\mathbf{F}$, we have $T\left\{x_{\mathbf{F}}\right\}^{\bot}\subset \left\{x_{\mathbf{E}}\right\}^{\bot}$, from where $T^{*}x_{\mathbf{E}}\in T^{*}\left\{x_{\mathbf{E}}\right\}^{\bot\bot}\subset \left\{x_{\mathbf{F}}\right\}^{\bot\bot}=\C x_{\mathbf{F}} $.
\end{proof}

For NSF's $\mathbf{F}$ and $\mathbf{E}$ over $X$ let $Mult\left(\mathbf{F}, \mathbf{E}\right)$ be the collection of all $\omega:X\to\C$ such that $M_{\omega}\in\Lo\left(\mathbf{F},\mathbf{E}\right)$.  Clearly, $Mult\left(\mathbf{F}, \mathbf{E}\right)$ is a linear subspace of $\Fo\left(X\right)$ and $\|\cdot\|_{Mult\left(\mathbf{F},\mathbf{E}\right)}$ defined by $\|\omega\|_{Mult\left(\mathbf{F}, \mathbf{E}\right)}=\|M_{\omega}\|$ is a seminorm. Note that the topology of $Mult\left(\mathbf{F},\mathbf{E}\right)$ depends only on the topology of $\mathbf{F}$ and $\mathbf{E}$. Hence, re-norming the latter results in an equivalent norm on the former.

If $\mathbf{F}$ and $\mathbf{E}$ are complete, then according to Proposition \ref{clogr}, $Mult\left(\mathbf{F},\mathbf{E}\right)$ is the collection of all $\omega$ such that $M_{\omega}\mathbf{F}\subset \mathbf{E}$. Since a continuous linear operator between normed spaces can be extended to an operator between their completions with the same norm, it follows that $Mult\left(\mathbf{F},\mathbf{E}\right)$ isometrically embeds into $Mult\left(\overline{\mathbf{F}},\overline{\mathbf{E}}\right)$ in the case when $\overline{\mathbf{F}}$ and $\overline{\mathbf{E}}$ are NSF's.

Note that in general the equality of MO's as operators does not imply the equality of their symbols. We will call a NSCF $\mathbf{F}$ over $X$ $1$-\emph{independent} if for every $x\in X$ we have $x_{\mathbf{F}}\ne 0_{\mathbf{F}^{*}}$, i.e. there is $f\in \mathbf{F}$ such that $f\left(x\right)\ne 0$. It is easy to see that a MO from a $1$-independent NSCF determines its symbol, and $\|\cdot\|_{Mult\left(\mathbf{F},\mathbf{E}\right)}$ is a norm. Also, $\1\in Mult\left(\mathbf{F},\mathbf{E}\right)$ if and only if $\mathbf{F}\subset\mathbf{E}$, with $\|\1\|_{Mult\left(\mathbf{F},\mathbf{E}\right)}$ being the norm of the inclusion.

Since from Proposition \ref{char} we have that the set of MO's can be characterized as $\bigcap\limits_{x\in X}\left\{T\in \Lo\left(\mathbf{F},\mathbf{E}\right), \left<Tf,x_{\mathbf{E}}\right>=0, f\in \mathbf{F}_{X\backslash \left\{x\right\}}\right\}$, we get the following property.

\begin{corollary}\label{wot}
If $\mathbf{F}$ and $\mathbf{E}$ are NSF's over $X$, then $Mult\left(\mathbf{F}, \mathbf{E}\right)$ embeds as a closed subspace of $\Lo\left(\mathbf{F},\mathbf{E}\right)$ with respect to the weak operator topology. In particular, if $\mathbf{E}$ is complete, $Mult\left(\mathbf{F}, \mathbf{E}\right)$ is a Banach space.
\end{corollary}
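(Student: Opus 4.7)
The plan is to identify $Mult(\mathbf{F}, \mathbf{E})$ with its image $\{M_\omega : \omega \in Mult(\mathbf{F}, \mathbf{E})\}$ inside $\Lo(\mathbf{F}, \mathbf{E})$ via the map $\omega \mapsto M_\omega$. Since the multiplier seminorm is defined to be $\|\omega\|_{Mult(\mathbf{F},\mathbf{E})} = \|M_\omega\|$, this map is an isometry once we pass to the quotient by its kernel (which consists of the symbols inducing the zero operator; in the $1$-independent case this kernel is trivial). The real content of the statement is therefore that the image is closed in the weak operator topology on $\Lo(\mathbf{F}, \mathbf{E})$.

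The key tool is the characterization in Proposition \ref{char}, already cited in the excerpt: by the equivalence (i)$\Leftrightarrow$(iii), the image equals
$$\bigcap_{x \in X} \bigcap_{f \in \mathbf{F}_{X \setminus \{x\}}} \bigl\{T \in \Lo(\mathbf{F},\mathbf{E}) : \langle Tf, x_{\mathbf{E}} \rangle = 0 \bigr\}.$$
For each fixed $f \in \mathbf{F}$ and each point evaluation $x_{\mathbf{E}} \in \mathbf{E}^{*}$ (which lies in $\mathbf{E}^*$ precisely because $\mathbf{E}$ is a NSF), the linear functional $T \mapsto \langle Tf, x_{\mathbf{E}} \rangle$ on $\Lo(\mathbf{F},\mathbf{E})$ is WOT-continuous by the very definition of the weak operator topology. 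Each set in the intersection is thus the kernel of a WOT-continuous linear functional and is therefore WOT-closed, and an arbitrary intersection of WOT-closed sets is WOT-closed.

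For the ``in particular'' assertion, observe that the WOT is weaker than the operator-norm topology on $\Lo(\mathbf{F},\mathbf{E})$, so any WOT-closed subspace is automatically operator-norm closed. When $\mathbf{E}$ is complete, $\Lo(\mathbf{F},\mathbf{E})$ is a Banach space in the operator norm, and a norm-closed linear subspace of a Banach space is itself a Banach space; the isometry $\omega \mapsto M_\omega$ then transfers this structure to $Mult(\mathbf{F},\mathbf{E})$. There is no genuine obstacle here: the argument is essentially bookkeeping that combines Proposition \ref{char} with the defining property of WOT, and the only point requiring mild care is distinguishing seminorm from norm when $\mathbf{F}$ fails to be $1$-independent, which does not affect the closedness statement.
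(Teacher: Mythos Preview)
Your proof is correct and follows essentially the same approach as the paper: the paper derives the corollary directly from the observation (stated just before it) that the set of multiplication operators equals $\bigcap_{x\in X}\{T\in\Lo(\mathbf{F},\mathbf{E}):\langle Tf,x_{\mathbf{E}}\rangle=0,\ f\in\mathbf{F}_{X\backslash\{x\}}\}$, which is precisely the intersection of WOT-closed sets you wrote down. Your additional remarks on the seminorm/norm distinction and the ``in particular'' clause are accurate elaborations of points the paper leaves implicit.
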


In what follows we will view $Mult\left(\mathbf{F}, \mathbf{E}\right)$ as simultaneously a set of functions and a set of operators as long as it does not cause a confusion.

Since $Mult\left(\mathbf{F}, \mathbf{E}\right)$ is a linear subspace of $\Fo\left(X\right)$ with a semi-norm, it is natural to ask whether it is a NSF, and what properties it might have. It turns out that under the ``minimal'' assumption that $\mathbf{F}$ is $1$-independent, $Mult\left(\mathbf{F}, \mathbf{E}\right)$ inherits the main properties of $\mathbf{E}$.

\begin{proposition}\label{muw}Let $\mathbf{F}$ and $\mathbf{E}$ be NSF's over $X$. If $\mathbf{F}$ is $1$-independent, then:
\item[(i)] $Mult\left(\mathbf{F}, \mathbf{E}\right)$ is a NSF over $X$, and moreover the weak operator topology is stronger than the pointwise topology on $Mult\left(\mathbf{F}, \mathbf{E}\right)$.
\item[(ii)] If the pointwise topology coincides with the weak topology on $\Bo_{\mathbf{E}}$, then the pointwise topology coincides with the weak operator topology on $\Bo_{Mult\left(\mathbf{F}, \mathbf{E}\right)}$.
\end{proposition}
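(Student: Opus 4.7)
The plan is straightforward given the machinery developed earlier in the section. For part (i), two things must be checked: that $Mult(\mathbf{F},\mathbf{E})$ is a genuine NSF over $X$ (that is, point evaluations are continuous with respect to $\|\cdot\|_{Mult(\mathbf{F},\mathbf{E})}$, which is already a norm thanks to $1$-independence, as noted just before Corollary \ref{wot}), and that the weak operator topology dominates the pointwise topology on the multiplier algebra.

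The key estimate is essentially one line. Fix $x \in X$; using $1$-independence, pick $f_{x}\in\mathbf{F}$ with $f_{x}(x)\neq 0$. For any $\omega\in Mult(\mathbf{F},\mathbf{E})$,
$$\omega(x)f_{x}(x)=(M_{\omega}f_{x})(x)=x_{\mathbf{E}}(M_{\omega}f_{x}),$$
so boundedness of $x_{\mathbf{E}}$ on $\mathbf{E}$ gives
$$|\omega(x)|\leq \frac{\|x_{\mathbf{E}}\|_{\mathbf{E}^{*}}\,\|f_{x}\|_{\mathbf{F}}}{|f_{x}(x)|}\,\|\omega\|_{Mult(\mathbf{F},\mathbf{E})}.$$
This inequality simultaneously secures both assertions in (i): the left-hand side is the point evaluation of $\omega$, so $x_{Mult(\mathbf{F},\mathbf{E})}$ is bounded, making $Mult(\mathbf{F},\mathbf{E})$ a NSF; and if a net $M_{\omega_{\alpha}}\to M_{\omega}$ in WOT, then plugging in the specific WOT-inducing functional $T\mapsto x_{\mathbf{E}}(Tf_{x})$ yields $\omega_{\alpha}(x)f_{x}(x)\to \omega(x)f_{x}(x)$, i.e.\ $\omega_{\alpha}(x)\to\omega(x)$, as required.

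For part (ii), by (i) it remains only to show that on $\Bo_{Mult(\mathbf{F},\mathbf{E})}$ pointwise convergence implies WOT convergence. Suppose $\omega_{\alpha}\to\omega$ pointwise with $\|\omega_{\alpha}\|_{Mult(\mathbf{F},\mathbf{E})},\|\omega\|_{Mult(\mathbf{F},\mathbf{E})}\leq 1$. For each fixed $f\in\mathbf{F}$, the net $\{M_{\omega_{\alpha}}f\}_{\alpha}$ and its candidate limit $M_{\omega}f$ all lie in the scaled ball $\|f\|_{\mathbf{F}}\cdot\Bo_{\mathbf{E}}$, and $(M_{\omega_{\alpha}}f)(x)=\omega_{\alpha}(x)f(x)\to\omega(x)f(x)=(M_{\omega}f)(x)$ for every $x\in X$. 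The hypothesis that pointwise and weak topologies agree on $\Bo_{\mathbf{E}}$ (hence on its scalar multiples) then upgrades this pointwise convergence to weak convergence $M_{\omega_{\alpha}}f\to M_{\omega}f$ in $\mathbf{E}$, which, as $f$ was arbitrary, is exactly convergence in WOT.

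No step poses a genuine obstacle; the only subtlety worth flagging is that WOT on $\Lo(\mathbf{F},\mathbf{E})$ is generated by all functionals $T\mapsto \phi(Tf)$ with $\phi\in\mathbf{E}^{*}$, not merely the point evaluations on $\mathbf{E}$. That is precisely why the hypothesis in (ii) is phrased as coincidence of pointwise and weak topologies on $\Bo_{\mathbf{E}}$: this is the exact strength needed to promote fiberwise pointwise convergence of the images $M_{\omega_{\alpha}}f$ into genuine weak convergence.
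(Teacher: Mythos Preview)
Your proof is correct and follows essentially the same approach as the paper's. The only cosmetic differences are that the paper normalizes to $f_{x}(x)=1$ and phrases (i) via a WOT-continuous seminorm rather than nets, and in (ii) reduces to nets converging to $0$ rather than to a general limit $\omega$; these are immaterial.
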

\begin{proof}
(i): Let $x\in X$ and let $f\in \mathbf{F}$ be such that $f\left(x\right)=1$. Then the semi-norm $|||\cdot|||$ on $Mult\left(\mathbf{F}, \mathbf{E}\right)$ defined by $|||\omega|||=\left|\left<M_{\omega}f,x_{\mathbf{E}}\right>\right|=\left|\omega\left(x\right)f\left(x\right)\right|=\left|\omega\left(x\right)\right|$ is continuous with respect to the weak operator topology. Sine $x$ was chosen arbitrarily, the second claim follows. Since the norm topology is stronger than the weak operator topology, we conclude that $Mult\left(\mathbf{F}, \mathbf{E}\right)$ is a NSF.\medskip

(ii): Assume that a net $\left\{\omega_{i}\right\}_{i\in I}\subset \Bo_{Mult\left(\mathbf{F}, \mathbf{E}\right)}$ converges pointwise to $0$. Then, for every $f\in \mathbf{F}$ and $i\in I$ we have $\|\omega_{i}f\|\le \|f\|$, and so the net $\left\{\omega_{i}f\right\}_{i\in I}$ is bounded in $\mathbf{E}$ and pointwise convergent to $0$. From the assumption about $\mathbf{E}$ it follows that $\omega_{i}f\xrightarrow{i\in I}0$ weakly. Hence, the pointwise topology is stronger than the weak operator topology on $\Bo_{Mult\left(\mathbf{F}, \mathbf{E}\right)}$. Combining this with (i) shows that these two topologies coincide on $\Bo_{Mult\left(\mathbf{F}, \mathbf{E}\right)}$.
\end{proof}

\begin{remark}
In a similar way as in part (i), one can show that if $\1\in\mathbf{F}$ then $Mult\left(\mathbf{F},\mathbf{E}\right)\subset \mathbf{E}$ with the strong operator topology being stronger than the topology of $\mathbf{E}$. Furthermore, the norm of the inclusion (with respect to the norm on $Mult\left(\mathbf{F},\mathbf{E}\right)$) is at most $\|\1\|_{\mathbf{F}}$. Indeed, if $\omega\in Mult\left(\mathbf{F},\mathbf{E}\right)$, then $\omega=M_{\omega} \1\in \mathbf{E}$ with $\|\omega\|\le \|\1\|_{\mathbf{F}}\|M_{\omega}\|=\|\1\|_{\mathbf{F}}\|\omega\|_{Mult\left(\mathbf{F},\mathbf{E}\right)}$.
\qed\end{remark}

\begin{remark}\label{sde}
Note that since $\Fo\left(X\right)$ is a reflexive locally convex space, there is $J_{\mathbf{E}}^{**}:\mathbf{E}^{**}\to \Fo\left(X\right)$. It was shown in \cite{erz3} that $J_{\mathbf{E}}^{**}$ is injective if and only if $\spac \left\{x_{\mathbf{E}}\left|x\in X\right.\right\}=\mathbf{E}^{*}$ and if and only if the weak and pointwise topologies coincide on $\B_{\mathbf{E}}$. In this case $\mathbf{E}^{**}$ is a BSF over $X$ and $J_{\mathbf{E}^{**}}=J_{\mathbf{E}}^{**}$. This happens in particular, when $\mathbf{E}$ is reflexive. There is a ``sequential'' variant of the statement. An example of a NS(C)F to which it is applicable is $\Co_{0}\left(X\right)$ (this follows from \cite[IV.6, Corollary 4]{ds}, applied to the one point compactification of $X$).
\qed\end{remark}\medskip

For a NSF $\mathbf{F}$ over $X$ let $Mult\left(\mathbf{F}\right)=Mult\left(\mathbf{F},\mathbf{F}\right)$. Clearly, $Mult\left(\mathbf{F}\right)$ is a unital algebra, $\|\cdot\|_{Mult\left(\mathbf{F}\right)}$ is a submultiplicative seminorm, and in particular, $\|\1\|_{Mult\left(\mathbf{F}\right)}=1$. Moreover, $\1\in \mathbf{F}$ if and only if $Mult\left(\mathbf{F}\right)\subset \mathbf{F}$, with the norm of the inclusion equal to $\|\1\|$, since $\|\1\|=\|\1\|\cdot\|\1\|_{Mult\left(\mathbf{F}\right)}$. It follows from Dedekind's independence theorem, that any collection of non-zero point evaluations is linearly independent on $Mult\left(\mathbf{F}\right)$, as long as the latter separates the corresponding points. If $\mathbf{F}$ is complete, then $Mult\left(\mathbf{F}\right)$ is a unital Banach Algebra, according to Corollary \ref{wot}. If $\mathbf{F}$ is a $1$-independent NSF over $X$, then $Mult\left(\mathbf{F}\right)$ contractively embeds into $\Fo_{\8}\left(X\right)$ (see \cite[Proposition 2.2]{erz3}).

\begin{proposition}\label{subn}
Let $\mathbf{F}$ be a BSF over $X$, which is a subalgebra of $\Fo\left(X\right)$. Then there is $\alpha>0$ such that $\|fg\|\le\alpha\|f\|\|g\|$, for any $f,g\in\mathbf{F}$. Moreover, the new norm $|||\cdot|||$ defined by $|||f|||=\alpha\|f\|$, for $f\in \mathbf{F}$, is submultiplicative.
\end{proposition}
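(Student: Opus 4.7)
The plan is to realize every element of $\mathbf{F}$ as a multiplication operator on $\mathbf{F}$ and then apply the Uniform Boundedness Principle to the resulting family.

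First, since $\mathbf{F}$ is a subalgebra of $\Fo(X)$, for each $f\in\mathbf{F}$ we have $M_{f}\mathbf{F}\subset\mathbf{F}$. Because $\mathbf{F}$ is a BSF, Proposition \ref{clogr} yields $M_{f}\in\Lo(\mathbf{F},\mathbf{F})$, so $f\in Mult(\mathbf{F})$ as a set. The key step is then to control the operator norm $\|M_{f}\|$ in terms of $\|f\|$.

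To do this, consider the family $\{M_{f}:\|f\|\le 1\}\subset\Lo(\mathbf{F},\mathbf{F})$. For a fixed $g\in\mathbf{F}$, commutativity of multiplication in $\Fo(X)$ gives
\[
\|M_{f}g\|=\|fg\|=\|M_{g}f\|\le\|M_{g}\|\cdot\|f\|\le\|M_{g}\|,
\]
where $\|M_{g}\|<+\8$ by the previous paragraph. Hence the family $\{M_{f}:\|f\|\le 1\}$ is pointwise bounded on $\mathbf{F}$, and the Uniform Boundedness Principle (applicable because $\mathbf{F}$ is complete) produces a constant $\alpha>0$ with $\|M_{f}\|\le\alpha$ for every $\|f\|\le 1$. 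By homogeneity, $\|fg\|=\|M_{f}g\|\le\alpha\|f\|\|g\|$ for all $f,g\in\mathbf{F}$.

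For the second assertion, set $|||f|||=\alpha\|f\|$. This is clearly an equivalent norm on $\mathbf{F}$, and
\[
|||fg|||=\alpha\|fg\|\le\alpha^{2}\|f\|\|g\|=|||f|||\cdot|||g|||,
\]
so $|||\cdot|||$ is submultiplicative. There is no real obstacle here: the only nontrivial input is that point evaluations are bounded (used implicitly through Proposition \ref{clogr} to get each $M_{f}$ bounded), and then UBP converts separate continuity of multiplication into joint continuity in the standard Banach-algebra fashion.
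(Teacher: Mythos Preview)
Your argument is correct. Both you and the paper begin identically, invoking Proposition \ref{clogr} to get each $M_{f}\in\Lo(\mathbf{F})$. The divergence is in the second step: the paper views this as an inclusion $\mathbf{F}\subset Mult(\mathbf{F})$ and applies a Closed Graph argument (Remark \ref{clogr2}) to conclude that the inclusion is continuous, whence $\|f\|_{Mult(\mathbf{F})}\le\alpha\|f\|$; you instead exploit commutativity $M_{f}g=M_{g}f$ to exhibit pointwise boundedness of $\{M_{f}:\|f\|\le 1\}$ and invoke the Uniform Boundedness Principle directly on $\mathbf{F}$. Your route sidesteps the mild technicality that $\|\cdot\|_{Mult(\mathbf{F})}$ is a priori only a seminorm (which is why the paper needs Remark \ref{clogr2} rather than Proposition \ref{clogr} again), while the paper's route makes explicit the conceptual point that $\mathbf{F}$ embeds continuously into its own multiplier algebra. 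Both are standard ways to pass from separate to joint continuity of a bilinear map on a Banach space.
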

\begin{proof}
If $\mathbf{F}$ is an algebra, then $M_{f}\mathbf{F}\subset \mathbf{F}$, for every $f\in \mathbf{F}$, and since it is a BSF, $\mathbf{F}\subset Mult\left(\mathbf{F}\right)$, due to Proposition \ref{clogr}. Moreover, from Remark \ref{clogr2} this inclusion is in fact continuous. Hence, there is $\alpha\ge 1$ such that $\|f\|_{Mult\left(\mathbf{F}\right)}\le\alpha\|f\|$, for all $f\in \mathbf{F}$, from where $\|fg\|\le \|g\|\|f\|_{Mult\left(\mathbf{F}\right)}\le\alpha\|f\|\|g\|$, $f,g\in \mathbf{F}$. The second claim is easy to verify.
\end{proof}

Let $K$ be a kernel on $X$ (see Example \ref{rkhs}), and let $\mathbf{H}_{K}$ be the corresponding HSF. One can show (see \cite[Corollary 2.37]{am}) that $\omega\in \Bo_{Mult\left(\mathbf{H}_{K}\right)}$ if and only if $\left(1-\omega\otimes\overline{\omega}\right)K$ is a kernel. Using this fact let us compare multiplier algebras of different HSF's.

\begin{proposition}If $K$ and $L$ are two kernels on $X$, then $Mult\left(\mathbf{H}_{K}\right)$ is contractively embedded into $Mult\left(\mathbf{H}_{KL}\right)$.
\end{proposition}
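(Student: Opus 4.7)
The plan is to reduce the statement to the kernel characterization recorded immediately before the proposition: $\omega\in\Bo_{Mult\left(\mathbf{H}_{K}\right)}$ iff $\left(1-\omega\otimes\overline{\omega}\right)K$ is a (positive semi-definite) kernel on $X$, and analogously for $\mathbf{H}_{KL}$. Of course, this presupposes that $\mathbf{H}_{KL}$ is well defined, which follows because the pointwise product of two kernels is a kernel (Schur's product theorem, as already noted in Example \ref{rkhs}), so $KL$ is itself a kernel and Theorem 2.23 of \cite{am} supplies the HSF $\mathbf{H}_{KL}$.

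Next, take any $\omega\in \Bo_{Mult\left(\mathbf{H}_{K}\right)}$. By the characterization above, $\left(1-\omega\otimes\overline{\omega}\right)K$ is a kernel on $X$. Multiplying by the kernel $L$ and invoking once more that the entrywise product of kernels is a kernel, we conclude that
$$\left(1-\omega\otimes\overline{\omega}\right)\left(KL\right)=\bigl[\left(1-\omega\otimes\overline{\omega}\right)K\bigr]\cdot L$$
is also a kernel. Applying the characterization in the reverse direction to the HSF $\mathbf{H}_{KL}$, this means exactly that $\omega\in Mult\left(\mathbf{H}_{KL}\right)$ with $\|\omega\|_{Mult\left(\mathbf{H}_{KL}\right)}\le 1$.

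By homogeneity, for any $\omega\in Mult\left(\mathbf{H}_{K}\right)$ with $\|\omega\|_{Mult\left(\mathbf{H}_{K}\right)}=c>0$ we may apply the previous step to $\omega/c$ to obtain $\|\omega\|_{Mult\left(\mathbf{H}_{KL})\right)}\le c$; together with the trivial case $c=0$, this yields the contractive inclusion $Mult\left(\mathbf{H}_{K}\right)\hookrightarrow Mult\left(\mathbf{H}_{KL}\right)$ (viewed as function spaces on $X$). There is no genuine obstacle here: the entire proof is the factorization above plus closure of the cone of kernels under products, both of which are already in hand.
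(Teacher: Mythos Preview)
Your proof is correct and is essentially identical to the paper's own argument: both use the kernel characterization of $\Bo_{Mult(\mathbf{H}_K)}$ and then multiply $(1-\omega\otimes\overline{\omega})K$ by $L$, invoking Schur's product theorem. You have merely spelled out the well-definedness of $\mathbf{H}_{KL}$ and the homogeneity step, which the paper leaves implicit.
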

\begin{proof}
If $\omega\in\Bo_{Mult\left(\mathbf{H}_{K}\right)}$ then $\left(1-\omega\otimes\overline{\omega}\right)K$ is a kernel. Since the product of kernels is a kernel, it follows that $\left(1-\omega\otimes\overline{\omega}\right)KL$ is a kernel, from where $\omega\in\Bo_{Mult\left(\mathbf{H}_{KL}\right)}$.
\end{proof}

\begin{remark}
One can also show that $M_{L\left(\cdot,z\right)}$ is an operator from $\mathbf{H}_{K}$ into $\mathbf{H}_{KL}$, of norm $\sqrt{L\left(z,z\right)}$, for any $z\in X$.\qed
\end{remark}

\section{Every separable Banach space is a NSCF}

We interject the discussion of multiplier algebras to show that every separable Banach space can be realized as a BSCF over every separable metrizable space, which generalizes a result from \cite{mr}. Before doing that however, let us discuss some basic properties of NSCF's.

Until the end of the section $X$ is a Hausdorff topological space. We will often need to put certain restrictions on the phase spaces of NSCF's. Namely, $X$ is called \emph{compactly generated}, or a \emph{k-space} whenever each set which has closed intersections with all compact subsets of $X$ is closed itself. It is easy to see that all metrizable and all locally compact Hausdorff spaces are compactly generated. Moreover, Arzela-Ascoli theorem describes the compact subsets of $\Co\left(X\right)$ in the event when $X$ is compactly generated, which further justifies the importance of this class of topological spaces. Furthermore, if $X$ is compactly generated, then $\Co\left(X\right)$ is a complete locally convex space (see\cite[8.3.C]{engelking}). Additional details concerning the compactly generated spaces can be found in \cite[3.3]{engelking}.\medskip

Let $\overline{\B_{\mathbf{F}}}^{\Fo\left(X\right)}$ be the closure of $\B_{\mathbf{F}}$ in $\Fo\left(X\right)$. We will need the following characterization of (weakly) compactly embedded NSCF's (part (i) was essentially proven in \cite[Theorem 2.3]{erz2}; part (ii) is a variation of a classic result, see \cite{bartle}, \cite[VI.7, Theorem 1]{ds}, \cite[3.7, Theorem 5]{grot}, \cite{wada}).

\begin{theorem}\label{barrell} Let $\mathbf{F}$ be a NSCF over $X$. Then:
\item[(i)] $\mathbf{F}$ is weakly compactly embedded if and only if $\overline{\B_{\mathbf{F}}}^{\Fo\left(X\right)}\subset \Co\left(X\right)$.
\item[(ii)] If $X$ is compactly generated, then $\mathbf{F}$ is compactly embedded if and only if $\B_{\mathbf{F}}$ is equicontinuous and if and only if the correspondence $x\to x_{\mathbf{F}}$ is norm-continuous.
\end{theorem}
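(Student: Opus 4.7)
The plan is to handle the two parts separately. For part (i), I would invoke a classical Grothendieck-style criterion for weak compactness in $\Co\left(X\right)$, referenced in the statement. For the forward direction, let $W$ denote the weak closure of $\B_{\mathbf{F}}$ in $\Co\left(X\right)$, which is weakly compact by hypothesis. Since every point evaluation is weakly continuous on $\Co\left(X\right)$, the pointwise topology is coarser than the weak one, so $W$ is pointwise compact; and as $\Co\left(X\right)\hookrightarrow\Fo\left(X\right)$ is continuous for the respective pointwise topologies, $W$ is pointwise closed in $\Fo\left(X\right)$. From $\B_{\mathbf{F}}\subseteq W$ one gets $\overline{\B_{\mathbf{F}}}^{\Fo\left(X\right)}\subseteq W\subseteq\Co\left(X\right)$. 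For the converse, $\B_{\mathbf{F}}$ is bounded in $\Co\left(X\right)$ and hence pointwise bounded, so by Tychonoff $\overline{\B_{\mathbf{F}}}^{\Fo\left(X\right)}$ is compact in $\Fo\left(X\right)$; together with the assumption that it sits inside $\Co\left(X\right)$, the cited criterion (\cite{bartle, ds, grot, wada}) yields weak relative compactness of $\B_{\mathbf{F}}$ in $\Co\left(X\right)$.

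For part (ii), the equivalence of the last two conditions is immediate from the identity
\[
\left\|x_{\mathbf{F}}-y_{\mathbf{F}}\right\|_{\mathbf{F}^{*}} = \sup_{f\in\B_{\mathbf{F}}}\left|f\left(x\right)-f\left(y\right)\right|,
\]
which makes pointwise equicontinuity of $\B_{\mathbf{F}}$ on $X$ simply a reformulation of norm continuity of $x\mapsto x_{\mathbf{F}}$. The direction ``equicontinuous implies compactly embedded'' uses Arzela-Ascoli for k-spaces: $\B_{\mathbf{F}}$ is pointwise bounded, and pointwise equicontinuity on $X$ restricts to equicontinuity on each compact $L\subseteq X$, which together give relative compactness in $\Co\left(X\right)$.

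The main direction is ``compactly embedded implies $x\mapsto x_{\mathbf{F}}$ is norm continuous''. Set $K=\overline{\B_{\mathbf{F}}}^{\Co\left(X\right)}$ and aim to prove that the evaluation map $\mathrm{ev}:K\times X\to\C$, $\left(f,x\right)\mapsto f\left(x\right)$, is jointly continuous. For each compact $L\subseteq X$, the restriction of the compact-open topology to $K$ agrees with the topology of uniform convergence on $L$: the former is Hausdorff, compact on $K$, and finer than the latter, so they coincide on $K$; under the uniform-on-$L$ topology, $\mathrm{ev}$ is jointly continuous on $K\times L$ by the classical interchange-of-limits argument. Since $X$ is compactly generated and $K$ is compact Hausdorff, the product $K\times X$ is itself a k-space (see \cite[3.3]{engelking}), so joint continuity on each $K\times L$ upgrades to joint continuity on $K\times X$.

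Granted this, for any $x_{0}\in X$ the function $h\left(x\right)=\sup_{f\in K}\left|f\left(x\right)-f\left(x_{0}\right)\right|$ is continuous on $X$ (from joint continuity of $\mathrm{ev}$ and compactness of $K$) and satisfies $h\left(x_{0}\right)=0$. Since $K$ is the compact-open closure of $\B_{\mathbf{F}}$, it is also the pointwise closure, and so $h\left(x\right)=\sup_{f\in\B_{\mathbf{F}}}\left|f\left(x\right)-f\left(x_{0}\right)\right|=\left\|x_{\mathbf{F}}-\left(x_{0}\right)_{\mathbf{F}}\right\|_{\mathbf{F}^{*}}$, yielding the desired norm continuity. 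The main obstacle is exactly the joint continuity of $\mathrm{ev}$ on $K\times X$; this is the step where the k-space hypothesis on $X$ is essential, entering through the fact that a product of a compact Hausdorff space and a k-space is again a k-space. Everything else is bookkeeping around Arzela-Ascoli and the duality between equicontinuity of $\B_{\mathbf{F}}$ and norm continuity of the point-evaluation map.
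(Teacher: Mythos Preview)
The paper does not supply its own proof here: part (i) is deferred to \cite[Theorem 2.3]{erz2} and part (ii) to classical Arzela-Ascoli-type results in \cite{bartle}, \cite{ds}, \cite{grot}, \cite{wada}. So there is no in-paper argument to compare against, only your sketch to assess on its own.

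Your outline is essentially correct, with two comments. In (i), the converse leans on ``the cited criterion'', but the references you name are the ones the paper attaches to part (ii); what actually does the work is Grothendieck's $C(K)$ theorem applied to restrictions to compacta. Concretely: every $\phi\in\Co\left(X\right)'$ factors through restriction to some compact $K\subset X$; the image of $\overline{\B_{\mathbf{F}}}^{\Fo\left(X\right)}$ in $\Co\left(K\right)$ is bounded and pointwise compact, hence weakly compact by Grothendieck, hence on it the pointwise and weak topologies agree; thus pointwise convergence in $\overline{\B_{\mathbf{F}}}^{\Fo\left(X\right)}$ upgrades to weak convergence in $\Co\left(X\right)$. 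You should make this step explicit rather than pointing at a criterion. In (ii), your proof of ``compactly embedded $\Rightarrow$ equicontinuous'' works but is circuitous: it essentially reproves, via joint continuity of evaluation and the k-space property of $K\times X$, what is already the converse half of Arzela-Ascoli for k-spaces, and invoking that theorem directly is shorter. Also, your intermediate claim that on $K$ the compact-open topology coincides with the uniform-on-$L$ topology is not literally true (the latter need not be Hausdorff when $L$ is small), though the conclusion you want --- joint continuity of evaluation on $K\times L$ --- follows anyway, simply because the restriction map $K\to\Co\left(L\right)$ is continuous into the sup norm.
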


\begin{example}\label{dual}
Let $F$ be a separable non-reflexive normed space. If $X=\Bo_{F^{*}}$ is endowed with the weak* topology, then $X$ is a metrizable compact space (see \cite[V.5, Theorem 1]{ds}), and $F\subset\Co\left(X\right)$. Let $\mathbf{F}$ stand for $F$ considered as a BSCF over $X$. It follows from Alaoglu and Goldstine theorems (see \cite[V.4, theorems 2 and 5]{ds}) that $\overline{\B_{\mathbf{F}}}^{\Fo\left(X\right)}=\overline{\B_{F}}^{\sigma\left(\left(F^{*}\right)',F^{*}\right)}=\Bo_{F^{**}}\not\subset\Co\left(X\right)$. Hence, $\mathbf{F}$ is not weakly compactly embedded.
\qed\end{example}\smallskip

\begin{example}[Lipschitz space]\label{lip}
Let $\rho$ be a metric on $X$ and let $z\in X$. For $f:X\to\C$ define $\dil f= \sup\left\{\frac{\left|f\left(x\right)-f\left(y\right)\right|}{\rho\left(x,y\right)}\left|x,y\in X,~ x\ne y\right.\right\}$. This functional generates a BSCF $Lip\left(X,\rho\right)=\left\{f:X\to\C\left|\dil f<+\8\right.\right\}$ with the norm $\|f\|=\dil f+\left|f\left(z\right)\right|$. One can show that $\|x_{\mathbf{F}}\|=\max\left\{1,\rho\left(x,z\right)\right\}$ and $\|x_{\mathbf{F}}-y_{\mathbf{F}}\|=\rho\left(x,y\right)$, for every $x,y\in X$ (the proof is a slight modification of the proof from \cite{ae}). Hence, $Lip\left(X,\rho\right)$ is compactly embedded due to part (ii) of Theorem \ref{barrell}.
\qed\end{example}\smallskip

Now let us move towards the stated goal of the section. We will call a sequence $\left\{f_{n}\right\}_{n\in\N}\subset\Co\left(X\right)$ \emph{very independent}, if there is no non-zero sequence $\left\{a_{n}\right\}_{n\in\N}\subset\C$ such that $\sum\limits_{n\in\N}a_{n}f_{n}\equiv0$, where the series converges in $\Co\left(X\right)$. We will also call $\left\{f_{n}\right\}_{n\in\N}$ \emph{tempered} if there is an increasing sequence $\left\{U_{n}\right\}_{n\in\N}$ of open sets such that $X=\bigcup\limits_{n\in\N}U_{n}$ and $\|f_{n}\|_{\8}^{U_{n}}<+\8$.

\begin{example}
Assume that $X$ is $\sigma$-compact and locally compact. Then, there is an increasing sequence $\left\{U_{n}\right\}_{n\in\N}$ of open relatively compact sets, such that $X=\bigcup\limits_{n\in\N}U_{n}$ (see \cite[3.8.C]{engelking}). Every sequence $\left\{f_{n}\right\}_{n\in\N}\subset\Co\left(X\right)$ is therefore tempered, since a continuous function is always bounded on a relatively compact set.
\qed\end{example}\smallskip

\begin{example}
Let $\rho$ be a metric on $X$ and assume that $f_{n}$ is Lipschitz with respect to $\rho$, for every $n\in \N$. Then, $\left\{f_{n}\right\}_{n\in\N}$ is tempered. Indeed, fix any $z\in X$, and let $U_{n}=\B\left(z,n\right)$; then $\|f_{n}\|_{\8}^{U_{n}}\le \left|f_{n}\left(z\right)\right|+n \dil f_{n}<+\8$.
\qed\end{example}\smallskip

\begin{example}
Assume that $X$ is a domain in $\C^{n}$. Then, the collection of all monomials is very independent. It is also tempered, since $X$ is $\sigma$-compact and locally compact.
\qed\end{example}\smallskip

As in \cite{mr} we recall (see \cite[Proposition 1f3 and Theorem 1f4]{lt}) that if $E$ is a separable Banach space, there is a sequence $\left\{e_{n}\right\}_{n\in\N}\subset \partial \B_{E}$ such that $\spac \left\{e_{n}\right\}_{n\in\N}=E$, and a bounded  sequence $\left\{\nu_{n}\right\}_{n\in\N}\subset E^{*}$ such that $\left\{\nu_{n}\right\}_{n\in\N}^{\bot}=\left\{0_{E}\right\}$ and $\left<e_{n},\nu_{m}\right>=\delta_{mn}$, for $m,n\in\N$. Moreover, if $E^{*}$ is separable, the latter sequence can be chosen so that $\spac \left\{\nu_{n}\right\}_{n\in\N}=E^{*}$.

\begin{proposition}\label{tvi}
Let $F$ be a separable Banach space and let $X$ be compactly generated. Let $\left\{g_{n}\right\}_{n\in\N}\subset\Co\left(X\right)$ be very independent and tempered, with the corresponding $\left\{U_{n}\right\}_{n\in\N}$. Let $\left\{b_{n}\right\}_{n\in\N}\subset\left(0,+\8\right)$ be such that $\sum\limits_{n\in\N}\frac{\|g_{n}\|_{\8}^{U_{n}}}{b_{n}}<+\8$. Then there is a compactly embedded BSCF $\mathbf{F}$ over $X$, which is isometrically isomorphic to $F$ and such that $\spac\left\{g_{n}\right\}_{n\in\N}=\mathbf{F}\subset\spac^{\Co\left(X\right)}\left\{g_{n}\right\}_{n\in\N}$, and $\|g_{n}\|=b_{n}$, for every $n\in\N$.
\end{proposition}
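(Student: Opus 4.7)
The plan is to use the Markushevich-type system $(\{e_{n}\},\{\nu_{n}\})$ in $F\times F^{*}$ recalled immediately before the statement, and transport the norm of $F$ onto $\Co(X)$ via the series map
\[
T:F\to\Co(X),\qquad Tf=\sum_{n\in\N}\frac{\left<f,\nu_{n}\right>}{b_{n}}\,g_{n}.
\]
Set $M=\sup_{n}\|\nu_{n}\|<+\8$. The first checkpoint is convergence in $\Co(X)$. The crucial observation is that the increasing open cover $\{U_{n}\}$ absorbs compacta: every compact $K\subset X$ lies in a single $U_{k}$, because an increasing open cover of $K$ admits a finite subcover and the family is totally ordered by inclusion. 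For $n\ge k$ we then have $\|g_{n}\|_{\8}^{K}\le\|g_{n}\|_{\8}^{U_{k}}\le\|g_{n}\|_{\8}^{U_{n}}$, so the tail of the series is dominated on $K$ by $\|f\|_{F}M\sum_{n\ge k}\|g_{n}\|_{\8}^{U_{n}}/b_{n}$, which is summable by hypothesis; the finite initial block is harmless since each $g_{n}$ is bounded on $K$. Thus $T$ is well-defined and continuous from $F$ (norm) into $\Co(X)$ (compact-open).

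The biorthogonality $\left<e_{m},\nu_{n}\right>=\delta_{mn}$ yields $Te_{m}=g_{m}/b_{m}$. Injectivity of $T$ is where the hypotheses on $\{g_{n}\}$ and on $\{\nu_{n}\}$ combine: if $Tf=0$ in $\Co(X)$, very independence of $\{g_{n}\}$ forces $\left<f,\nu_{n}\right>=0$ for every $n$, and then $\{\nu_{n}\}^{\perp}=\{0_{F}\}$ gives $f=0$. Define $\mathbf{F}:=T(F)$ with the transported norm $\|Tf\|_{\mathbf{F}}:=\|f\|_{F}$; by construction $T$ is an isometric isomorphism of Banach spaces, and continuity of $T$ into $\Co(X)$ says precisely that this norm dominates the compact-open topology, so $\mathbf{F}$ is a BSCF over $X$. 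The normalization $\|g_{m}\|_{\mathbf{F}}=\|T(b_{m}e_{m})\|_{\mathbf{F}}=b_{m}$ is then immediate. Since $\spac\{e_{n}\}=F$, applying the isometry gives $\spac^{\mathbf{F}}\{g_{n}\}=\mathbf{F}$, and because the $\mathbf{F}$-norm dominates the compact-open topology, we also get $\mathbf{F}\subset\spac^{\Co(X)}\{g_{n}\}$.

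It remains to verify that $\mathbf{F}$ is compactly embedded. Since $X$ is compactly generated, Theorem \ref{barrell}(ii) reduces this to norm-continuity of $x\mapsto x_{\mathbf{F}}\in\mathbf{F}^{*}$. Under the isometric identification $\mathbf{F}^{*}\cong F^{*}$ induced by $T$, the evaluation $x_{\mathbf{F}}$ corresponds to
\[
\widetilde{x}:=\sum_{n\in\N}\frac{g_{n}(x)}{b_{n}}\,\nu_{n}\in F^{*},
\]
which converges in $F^{*}$ by the same dominated-tail bound used above. Fix $y\in X$ and $\varepsilon>0$; choose $k$ with $y\in U_{k}$ and large enough that $2M\sum_{n\ge k}\|g_{n}\|_{\8}^{U_{n}}/b_{n}<\varepsilon/2$. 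Then for every $x\in U_{k}$,
\[
\|\widetilde{x}-\widetilde{y}\|_{F^{*}}\le M\sum_{n<k}\frac{|g_{n}(x)-g_{n}(y)|}{b_{n}}+\frac{\varepsilon}{2},
\]
and the finite sum is less than $\varepsilon/2$ on a suitable neighbourhood of $y$ inside $U_{k}$, by continuity of the individual $g_{1},\dots,g_{k-1}$. This gives continuity of $x\mapsto\widetilde{x}$, hence of $x\mapsto x_{\mathbf{F}}$, completing the proof. The main technical point is the bookkeeping in the last estimate: one must split uniformly into a finite head, controlled by continuity of each $g_{n}$, and a tail, controlled by the summability assumption on $\|g_{n}\|_{\8}^{U_{n}}/b_{n}$; everything else is routine transport of structure through the isometry $T$.
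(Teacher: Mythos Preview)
Your proof is correct and follows essentially the same route as the paper: both define the same series map $Jf=\sum_{n}\frac{\langle f,\nu_{n}\rangle}{b_{n}}g_{n}$, verify convergence in $\Co(X)$ via the absorption of compacta by the increasing cover $\{U_{n}\}$, obtain injectivity from very independence together with $\{\nu_{n}\}^{\perp}=\{0\}$, and then deduce compact embedding from Theorem \ref{barrell}(ii) by recognizing $x_{\mathbf{F}}=\sum_{n}\frac{g_{n}(x)}{b_{n}}\nu_{n}$. Your head/tail $\varepsilon/2$ argument for the norm-continuity of $x\mapsto x_{\mathbf{F}}$ merely spells out what the paper compresses into ``the same estimates as above show that this series converges in $\Co(X,F^{*})$''.
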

\begin{proof}
The proof is similar to that in \cite{mr}. Fix $\left\{e_{n}\right\}_{n\in\N}\subset\partial \B_{F}$ and $\left\{\nu_{n}\right\}_{n\in\N}\subset \alpha \B_{F^{*}}$, as above, where $\alpha>0$. Let $J: F\to \Co\left(X\right)$ be defined by $Jf=\sum\limits_{n\in\N}  \frac{\left<f,\nu_{n}\right>}{b_{n}}g_{n}$, for $f\in F$. In order to prove that $J$ is well-defined we need to show that the series converges uniformly on compact sets. Let $K\subset X$ be compact. Since $\left\{U_{n}\right\}_{n\in\N}$ is an increasing sequence whose union covers $K$ there is $m\in\N$ such that $K\subset U_{m}$. Then for $f\in F$ we have
$$\sum\limits_{n\in\N} \left\|\frac{\left<f,\nu_{n}\right>}{b_{n}}g_{n}\right\|_{\8}^{K}\le \sum\limits_{n\in\N}\frac{\alpha\|f\|}{b_{n}}\|g_{n}\|_{\8}^{K}\le\alpha \|f\|\left(\sum\limits_{n=1}^{m-1}\frac{\|g_{n}\|_{\8}^{K}}{b_{n}}+\sum\limits_{n=m}^{\8}\frac{\|g_{n}\|_{\8}^{U_{n}}}{b_{n}}\right)<+\8.$$
Since $\Co\left(X\right)$ is complete, the series converges in $\Co\left(X\right)$. Moreover, $$JF=J\spac \left\{e_{n}\right\}_{n\in\N}\subset\spac^{\Co\left(X\right)}\left\{Je_{n}\right\}_{n\in\N}= \spac^{\Co\left(X\right)}\left\{g_{n}\right\}_{n\in\N}.$$ Assume that $Jf\equiv0$, for some $f$. Since $\left\{g_{n}\right\}_{n\in\N}$ is very independent, it follows that $\left<f,\nu_{n}\right>=0$, for every $n$, which implies that $f\in \left\{\nu_{n}\right\}_{n\in\N}^{\bot}=\left\{0_{F}\right\}$. Hence, $J$ is injective.

Now, identifying $F$ with $\mathbf{F}=JF$ we see that $x_{\mathbf{F}}=\sum\limits_{n\in\N}\frac{g_{n}\left(x\right)}{b_{n}} \nu_{n}$, for $x\in X$. The same estimates as above show that this series converges in $\Co\left(X,E^{*}\right)$, and so from part (ii) Theorem \ref{barrell}, $\mathbf{F}$ is compactly embedded. Finally, since $J$ is an isometry from $F$ onto $\mathbf{F}$ we have $\spac\left\{g_{n}\right\}_{n\in\N}=\mathbf{F}$ and $\|g_{n}\|=\|b_{n}e_{n}\|=b_{n}$, for every $n\in\N$.
\end{proof}

Note that $b_{n}$'s as in the proposition can always be chosen. For example, take $b_{n}=2^{n}\|g_{n}\|_{\8}^{U_{n}}$, for $n\in\N$. Similarly, one can show that every separable Banach space can be realized as a subspace of a given BSCF.

\begin{proposition}\label{tvi1}
Let $F$ be a separable Banach space and let $\mathbf{E}$ be a BSCF over $X$. Let $\left\{g_{n}\right\}_{n\in\N}\subset\mathbf{E}$ be a very independent sequence. Let $\left\{b_{n}\right\}_{n\in\N}\subset\left(0,+\8\right)$ be such that $\sum\limits_{n\in\N}\frac{\|g_{n}\|_{\mathbf{E}}}{b_{n}}<+\8$. Then there is a BSCF $\mathbf{F}$ over $X$, which is isometrically isomorphic to $F$ and such that $\left\{g_{n}\right\}_{n\in\N}\subset \mathbf{F}\subset\mathbf{E}$, with $\|g_{n}\|_{\mathbf{F}}=b_{n}$, for every $n\in\N$.
\end{proposition}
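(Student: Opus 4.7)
The proof will closely mirror that of Proposition \ref{tvi}, with the role of $\Co(X)$ replaced by $\mathbf{E}$, so absolute summability in the norm of $\mathbf{E}$ takes the place of uniform convergence on compacta. First I would invoke the same construction from \cite[Proposition 1f3 and Theorem 1f4]{lt}: fix $\{e_n\}_{n\in\N} \subset \partial \B_F$ with $\spac\{e_n\}_{n\in\N} = F$ and a bounded sequence $\{\nu_n\}_{n\in\N} \subset \alpha \B_{F^*}$ with $\langle e_n,\nu_m\rangle = \delta_{mn}$ and $\{\nu_n\}_{n\in\N}^\perp = \{0_F\}$. Define $J: F \to \mathbf{E}$ by
$$Jf = \sum_{n\in\N}\frac{\langle f,\nu_n\rangle}{b_n}\,g_n.$$

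The hypothesis $\sum_{n\in\N}\|g_n\|_{\mathbf{E}}/b_n < +\infty$ together with $|\langle f,\nu_n\rangle| \le \alpha\|f\|$ gives
$$\sum_{n\in\N}\left\|\frac{\langle f,\nu_n\rangle}{b_n}\,g_n\right\|_{\mathbf{E}} \le \alpha\|f\|\sum_{n\in\N}\frac{\|g_n\|_{\mathbf{E}}}{b_n} < +\infty,$$
so, since $\mathbf{E}$ is a Banach space, the defining series converges absolutely in $\mathbf{E}$ and $J \in \Lo(F,\mathbf{E})$. Because $\mathbf{E}$ is continuously included in $\Co(X)$, the same series also converges to $Jf$ in $\Co(X)$; if $Jf \equiv 0$, very independence of $\{g_n\}_{n\in\N}$ forces $\langle f,\nu_n\rangle = 0$ for every $n$, hence $f = 0_F$, so $J$ is injective.

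Now set $\mathbf{F} := JF \subset \mathbf{E}$ and transfer the norm of $F$ via $\|Jf\|_{\mathbf{F}} = \|f\|_F$, making $J$ an isometric isomorphism, so that $\mathbf{F}$ is automatically a Banach space. Since $J:F \to \mathbf{E}$ is continuous, $\|\cdot\|_{\mathbf{F}}$ dominates $\|\cdot\|_{\mathbf{E}}$ on $\mathbf{F}$, and $\|\cdot\|_{\mathbf{E}}$ in turn dominates the compact-open topology; hence $\mathbf{F}$ is a BSCF over $X$ contained in $\mathbf{E}$. Finally, the biorthogonality relation gives
$$J(b_n e_n) = \sum_{m\in\N}\frac{\langle b_n e_n,\nu_m\rangle}{b_m}\,g_m = g_n,$$
so each $g_n$ lies in $\mathbf{F}$ with $\|g_n\|_{\mathbf{F}} = \|b_n e_n\|_F = b_n$, completing the construction.

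There is essentially no substantive obstacle here: the whole argument is a direct adaptation of Proposition \ref{tvi} in which the tempered-plus-compact-open estimate is replaced by the cleaner absolute summability estimate in $\mathbf{E}$. The only point worth being careful about is confirming that verifying injectivity of $J$ indeed only requires the series to converge in $\Co(X)$ (which is guaranteed by the continuous inclusion $\mathbf{E} \hookrightarrow \Co(X)$), so that the very independence hypothesis, which is a statement about $\Co(X)$-convergent series, is applicable.
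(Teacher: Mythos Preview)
Your proposal is correct and is exactly the adaptation the paper has in mind: the paper gives no explicit proof of Proposition~\ref{tvi1}, stating only that it is proved ``similarly'' to Proposition~\ref{tvi}, and your argument is precisely that adaptation, with absolute summability in the Banach space $\mathbf{E}$ replacing the tempered/compact-open estimate. Your observation that completeness of $\mathbf{E}$ (rather than of $\Co(X)$) is what makes the series converge---so that no compact-generation hypothesis on $X$ is needed here---and that the continuous inclusion $\mathbf{E}\hookrightarrow\Co(X)$ then transfers convergence to $\Co(X)$ so very independence applies, is exactly the right bookkeeping.
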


Let us now prove the main result of the section.

\begin{theorem}\label{real}
Let $F$ be a separable Banach space and let $\left(X,\rho\right)$ be a separable metric space. Then there is BSCF $\mathbf{F}$ over $X$ that has the following properties:
\item[(i)] $\mathbf{F}$ is isometrically isomorphic to $F$, compactly embedded and consists of Lipschitz functions.
\item[(ii)] The point evaluations form a linearly independent subset of $\mathbf{F}^{*}$.
\item[(iii)] $\mathbf{F}$ generates the topology of $X$, i.e. the topology of $X$ is the minimal topology which makes every element of $\mathbf{F}$ continuous.\smallskip

Moreover, if $F^{*}$ is separable, $\mathbf{F}$ can be chosen so that the weak, pointwise and compact-open topologies coincide on $\Bo_{\mathbf{F}}$.
\end{theorem}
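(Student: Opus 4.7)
The plan is to apply Proposition \ref{tvi} to a carefully chosen tempered very-independent sequence $\{g_{n}\}_{n\in\N}\subset\Lip\left(X,\rho\right)$, and then verify (i)--(iii) for the resulting $\mathbf{F}$. Fix a countable dense $\{x_{k}\}_{k\in\N}\subset X$ and a basepoint $z\in X$. I would aim for $\{g_{n}\}$ to satisfy, in addition to being Lipschitz: (a) very independence in $\Co\left(X\right)$; (b) \emph{separation of point measures}, meaning for any distinct $y_{1},\dots,y_{s}\in X$ and any nonzero $\left(c_{1},\dots,c_{s}\right)\in\C^{s}$, some $g_{n}$ satisfies $\sum_{i}c_{i}g_{n}\left(y_{i}\right)\ne 0$; and (c) the coarsest topology making every $g_{n}$ continuous equals the metric topology on $X$. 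Any such sequence is automatically tempered by the Lipschitz example preceding the theorem, with $U_{n}=\B\left(z,n\right)$ and $\|g_{n}\|_{\8}^{U_{n}}\le|g_{n}\left(z\right)|+n\dil g_{n}$.

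Properties (b) and (c) are easy individually: the Lipschitz tent functions $\phi_{k,m}\left(x\right)=\max\{0,2^{-m}-\rho\left(x,x_{k}\right)\}$, $k,m\in\N$, already have both (choose $x_{k}$ close to $y_{1}$ and $m$ so large that the support $\B\left(x_{k},2^{-m}\right)$ of $\phi_{k,m}$ misses $y_{2},\dots,y_{s}$ to witness (b); similar reasoning gives (c)). The main technical obstacle is to also achieve (a): the family $\{\phi_{k,m}\}$ has many redundancies in $\Co\left(X\right)$, so I would extract and if necessary slightly perturb a subfamily so that convergent infinite $\Co\left(X\right)$-combinations uniquely determine their coefficients, while retaining (b) and (c). The plan is an inductive selection: at step $n$, either admit the next $\phi_{k,m}$ or replace it by a Lipschitz perturbation supported in $\B\left(x_{k},2^{-m}\right)$ so that $g_{n}$ is nonzero at some test point at which all previously accepted $g_{j}$ and all later $g_{j}$ vanish, providing a $\Co\left(X\right)$-continuous ``coordinate functional'' that extracts the $n$-th coefficient from any $\Co\left(X\right)$-convergent series.

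Granted such $\{g_{n}\}$, choose $\{b_{n}\}\subset\left(0,+\8\right)$ with $\sum_{n}b_{n}^{-1}\|g_{n}\|_{\8}^{U_{n}}<+\8$ and $\sum_{n}b_{n}^{-1}\dil g_{n}<+\8$, e.g.\ $b_{n}=2^{n}\left(1+\|g_{n}\|_{\8}^{U_{n}}+\dil g_{n}\right)$. Proposition \ref{tvi} then yields a compactly embedded BSCF $\mathbf{F}$ isometric to $F$ with $\spac\{g_{n}\}=\mathbf{F}\subset\spac^{\Co\left(X\right)}\{g_{n}\}$, via the explicit map $Jf=\sum_{n}b_{n}^{-1}\left<f,\nu_{n}\right>g_{n}$. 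The second summability condition gives $\dil\left(Jf\right)\le\alpha\|f\|\sum_{n}b_{n}^{-1}\dil g_{n}<+\8$, so $\mathbf{F}\subset\Lip\left(X,\rho\right)$, which combined with Proposition \ref{tvi} gives (i). Property (ii) follows from (b), since any nontrivial linear relation among point evaluations on $\mathbf{F}$ would in particular vanish on every $g_{n}$; property (iii) follows from (c), since $\{g_{n}\}\subset\mathbf{F}\subset\Co\left(X\right)$ sandwiches the topology generated by $\mathbf{F}$ between that generated by $\{g_{n}\}$ and the metric topology.

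For the ``Moreover'' statement, when $F^{*}$ is separable I would additionally take $\{\nu_{n}\}\subset F^{*}$ with $\spac\{\nu_{n}\}=F^{*}$. Then $J^{*}:\mathbf{F}^{*}\to F^{*}$ is an isometric isomorphism and $J^{*}x_{\mathbf{F}}=\sum_{n}b_{n}^{-1}g_{n}\left(x\right)\nu_{n}$. Density of $\{x_{k}\}$, property (b), and totality of $\{\nu_{n}\}$ give $\spac\{x_{\mathbf{F}}:x\in X\}=\mathbf{F}^{*}$, whence Remark \ref{sde} yields coincidence of the weak and pointwise topologies on $\Bo_{\mathbf{F}}$; compact embeddedness makes $\Bo_{\mathbf{F}}$ relatively compact in $\Co\left(X\right)$, so pointwise convergence on $\Bo_{\mathbf{F}}$ upgrades to compact-open convergence, giving the triple coincidence.
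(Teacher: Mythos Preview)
Your overall strategy---choose a tempered very-independent sequence of Lipschitz functions and invoke Proposition~\ref{tvi1} with $\mathbf{E}=\Lip\left(X,\rho\right)$---is exactly the paper's. The gap lies in manufacturing such a sequence from tent functions. Your inductive scheme calls for a test point $p_n$ at which $g_n$ is nonzero while \emph{all later} $g_j$ vanish, but the later $g_j$ do not yet exist at step~$n$. If you repair this by imposing $g_j\left(p_i\right)=0$ as a side condition on each future step $j>i$, you still need $p_n$ to lie in $\B\left(x_{k_n},2^{-m_n}\right)$ and in the common zero set of $g_1,\dots,g_{n-1}$, and that intersection can be empty: on $X=[0,1]$, two tent functions centred at $\tfrac14$ and $\tfrac34$ with radius $\tfrac12$ have disjoint zero sets $[\tfrac34,1]$ and $[0,\tfrac14]$, so no third test point exists. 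You also leave unexplained why (b) and~(c) survive the perturbations---for instance the tuple $\left(y_1,\dots,y_s\right)$ in~(b) may include earlier test points, at which your perturbed functions have been forced to vanish.

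The paper sidesteps all of this with a single explicit choice: $g_0=\1$ and $g_n=\rho\left(\cdot,\{y_1,\dots,y_n\}\right)\wedge 1$ for a fixed dense sequence $\{y_k\}$. The zero sets $g_n^{-1}\left(0\right)=\{y_1,\dots,y_n\}$ are nested, so evaluating at $y_{n+1}$ annihilates every $g_m$ with $m>n$ but not $g_n$; this \emph{triangular} (rather than diagonal) structure gives very independence immediately and also yields $\nu_n\in\spa\{\left(y_k\right)_{\mathbf{F}}:k\le n+1\}$ by back-substitution, handling the ``Moreover'' clause. The proof of~(ii) for arbitrary points of $X$ is the subtlest step and exploits that the $g_m$ decrease monotonically to~$0$ pointwise. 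Your reduction of~(ii) to~(b) is in principle cleaner than the paper's argument, but it hinges on~(b) holding for the \emph{perturbed} sequence, which is precisely where your construction is incomplete.
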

\begin{proof}
Let $Y=\left\{y_{n}\right\}_{n\in\N}\subset X$ be a dense set of distinct points. Let $g_{0}=\1$, and for $n\in\N$ set $g_{n}=\rho\left(\cdot,\left\{y_{1},...,y_{n}\right\}\right)\wedge 1$. Clearly, $g_{n}$ is a Lipschitz function with $\|g_{n}\|_{\8}\le 1$ and $g_{n}^{-1}\left(0\right)=\left\{y_{k}\right\}_{k=1}^{n}$, for every $n\in\N_{0}=\N\cup \left\{0\right\}$.

Let us prove that $\left\{g_{n}\right\}_{n\in\N_{0}}$ is a very independent sequence. Assume that $\left\{a_{n}\right\}_{n\in\N_{0}}\subset\C$ are such that $\sum\limits_{n\in\N_{0}}a_{n}g_{n}\equiv0$, and the series converges. Let us show by induction that $a_{k}=0$, for all $k\in\N_{0}$. Indeed, if the statement is proven for $k=0,...,n-1$ (if $n=0$ nothing is proven), we have
$0=\sum\limits_{m\in\N_{0}}a_{m}g_{m}\left(y_{n+1}\right)=\sum\limits_{m=n}^{+\8}a_{m}g_{m}\left(y_{n+1}\right)=a_{n}g_{n}\left(y_{n+1}\right)$, as $g_{m}\left(y_{n+1}\right)=0$, for $m>n$. Since $g_{n}\left(y_{n+1}\right)\ne 0$ we conclude that $a_{n}=0$.\medskip

Let $\mathbf{F}$ be a BSCF that is generated by the preceding proposition and $\mathbf{E}=Lip\left(X,\rho\right)$. Then $\mathbf{F}$ is isometrically isomorphic to $F$, contains $g_{n}$, for every $n\in\N_{0}$ and is included into the Lipschitz space. Since the latter is compactly embedded, the same is true for $\mathbf{F}$.\medskip

In order to prove (ii), we will show that if $x_{1},...,x_{n}\in X$ are distinct, and $a_{1},...,a_{n}\in\C$ are such that $\sum\limits_{i=1}^{n}a_{i}g_{m}\left(x_{i}\right)=0$, for every $m\in \N_{0}$, then $a_{1}=...=a_{n}=0$. First, consider the case when $x_{1},...,x_{n}\in Y$, i.e. $x_{i}=y_{k_{i}}$, for every $i\in\overline{1,n}$. We may assume that $k_{1}<...<k_{n}$. Assume that we have proved that $a_{l+1}=...=a_{n}=0$, for $l\in\overline{1,n}$ (if $l=n$, then nothing is proven). Then, for $m=k_{l}-1$ we have that $0=\sum\limits_{i=1}^{n}a_{i}g_{m}\left(x_{i}\right)=\sum\limits_{i=1}^{l}a_{i}g_{m}\left(y_{k_{i}}\right)=a_{l}g_{m}\left(y_{m+1}\right)$, since $k_{i}\le m$, for $i\in \overline{1,l-1}$. As $g_{m}\left(y_{m+1}\right)\ne 0$ we conclude that $a_{l}=0$.

Let us now prove the claim in the general case by induction. Since $g_{0}=\1$, the claim is true for $n=1$. Assume that it is proven for $n-1$. The case when $x_{1},...,x_{n}\in Y$ was already considered, and so we may assume that $x_{n}\not\in Y$. Since $Y$ is dense, $\left\{g_{m}\left(x_{i}\right)\right\}_{m\in\N_{0}}$ is decreasing to $0$, for every $i\in\overline{1,n}$. Let $m$ be such that for every $i\in\overline{1,n}$ we have $g_{m}\left(x_{i}\right)\le \frac{d}{2}$, where $d=\min\limits_{0\le i<j\le n}\rho\left(x_{i},x_{j}\right)$.
 
Since $x_{n}\not\in Y$, it follows that $0\not\in \left\{g_{m}\left(x_{n}\right)\right\}_{m\in\N_{0}}$, but since $g_{m}\left(x_{n}\right)\xrightarrow{n\to\8}0$, there is $l> m$ such that  $g_{l}\left(x_{n}\right)< g_{l-1}\left(x_{n}\right)$. Since $g_{l}\left(x_{n}\right)= g_{l-1}\left(x_{n}\right)\wedge \rho\left(x_{n},y_{l}\right)$, it follows that $\rho\left(x_{n},y_{l}\right)<g_{l-1}\left(x_{n}\right)\le \frac{d}{2}$.

For $i\in\overline{1,n-1}$ we have $\rho\left(x_{i},y_{l}\right)\ge \rho\left(x_{i},x_{n}\right)-\rho\left(x_{n},y_{l}\right)\ge d-\frac{d}{2}=\frac{d}{2}\ge g_{l-1}\left(x_{i}\right)$. Hence, $g_{l}\left(x_{i}\right)=g_{l-1}\left(x_{i}\right)\wedge \rho\left(x_{i},y_{l}\right)=g_{l-1}\left(x_{i}\right)$. We have $-a_{n}g_{l-1}\left(x_{n}\right)=\sum\limits_{i=1}^{n-1}a_{i}g_{l-1}\left(x_{i}\right)=\sum\limits_{i=1}^{n-1}a_{i}g_{l}\left(x_{i}\right)=-a_{n}g_{l}\left(x_{n}\right)$. Since $g_{l}\left(x_{n}\right)\ne g_{l-1}\left(x_{n}\right)$ we conclude that $a_{n}=0$, but then $\sum\limits_{i=1}^{n-1}a_{i}g_{m}\left(x_{i}\right)=0$, for every $m\in N_{0}$, which contradicts the assumption of induction.\medskip

Let us prove (iii). Let $\tau$ be the topology generated by $\mathbf{F}$. Let $x\in X$ and let $\varepsilon \in\left(0,1\right)$. Let $n\in \N$ be minimal such that $\rho\left(x,y_{n}\right)<\frac{\varepsilon}{2}$ (its existence follows from the density of $Y$). Then, $g_{n-1}\left(x\right)\ge \frac{\varepsilon}{2}>g_{n}\left(x\right)$. Since $g_{n-1}$ and $g_{n}$ are continuous with respect to $\tau$, this topology contains the set $U=\left\{z\in X\left|g_{n-1}\left(z\right)>g_{n}\left(z\right)<\frac{\varepsilon}{2}\right.\right\}$. Clearly, $x\in U$, and for every $z\in U$ we have $g_{n-1}\left(z\right)>g_{n}\left(z\right)=g_{n-1}\left(z\right)\wedge \rho\left(z,y_{n}\right)$, from where $\rho\left(z,y_{n}\right)=g_{n}\left(z\right)<\frac{\varepsilon}{2}$. Hence, $\rho\left(x,z\right)\le \rho\left(x,y_{n}\right)+\rho\left(z,y_{n}\right)<\varepsilon$ and so $x\in U\subset \B\left(x,\varepsilon\right)$. Thus, since $x$ and $\varepsilon$ were chosen arbitrarily, the elements of $\tau$ form a basis of the original topology on $X$, and so $\mathbf{F}$ generates the topology of $X$.\medskip

Let us prove the last claim. As was mentioned above, if $\mathbf{F}^{*}$ is separable, we may assume that for $x\in X$ we have $x_{\mathbf{F}}=\sum\limits_{n\in\N_{0}}\frac{g_{n}\left(x\right)}{b_{n}} \nu_{n}$, where $\left\{b_{n}\right\}_{n\in\N}\subset\left(0,+\8\right)$ and $\left\{\nu_{n}\right\}_{n\in\N_{0}}\subset E^{*}$ are such that $\spac \left\{\nu_{n}\right\}_{n\in\N_{0}}=E^{*}$. Using induction, and consecutive substitution of $x=y_{n}$, one can show that $\nu_{n}\in \spa \left\{y_{\mathbf{F}}\left|y\in Y\right.\right\}$, for every $n\in\N_{0}$. Hence, $\spac \left\{x_{\mathbf{F}}\left|x\in X\right.\right\}=\mathbf{F}^{*}$, and so, as was mentioned in Remark \ref{sde}, the weak and pointwise topologies coincide on $\Bo_{\mathbf{F}}$. The compact-open topology is stronger than the pointwise topology, but since $\mathbf{F}$ is compactly embedded, it follows that the compact-open topology is weaker than the weak topology on $\Bo_{\mathbf{F}}$ (see \cite[2.18, Theorem 12]{grot}). Thus, all these three topologies coincide on $\Bo_{\mathbf{F}}$.
\end{proof}

\begin{remark}
The theorem can be generalized to the level of Frechet spaces, using the result from \cite{bo}.
\qed\end{remark}

\section{Multipliers of a NSCF}

Everywhere in this section $X$ is a Hausdorff space. It turns out that an analogue of Proposition \ref{muw} holds in the context of NSCF's.

\begin{theorem}\label{mus}Let $\mathbf{F}$ and $\mathbf{E}$ be NSCF's over $X$. If $\mathbf{F}$ is $1$-independent, then:
\item[(i)] $Mult\left(\mathbf{F}, \mathbf{E}\right)$ is a NSCF over $X$, and moreover the strong operator topology is stronger than the compact-open topology on $Mult\left(\mathbf{F}, \mathbf{E}\right)$.
\item[(ii)] If $\mathbf{E}$ is weakly compactly embedded, then so is $Mult\left(\mathbf{F}, \mathbf{E}\right)$.
\item[(iii)] If $X$ is compactly generated, and $\mathbf{E}$ is compactly embedded, then so is $Mult\left(\mathbf{F}, \mathbf{E}\right)$.
\end{theorem}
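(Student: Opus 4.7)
My strategy throughout is to exploit the identity $\omega(y) = (M_\omega f)(y)/f(y)$ on the open set $V_x := \{y : |f(y)|>1/2\}$, where $f\in\mathbf{F}$ is chosen so that $f(x)=1$. The $1$-independence of $\mathbf{F}$ (combined with scaling) is precisely what makes such a choice of $f$ available at every $x\in X$, and this local quotient representation is the device that transfers analytic properties of $\mathbf{E}$ to $Mult(\mathbf{F},\mathbf{E})$.

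For part (i), the above representation immediately shows that each $\omega\in Mult(\mathbf{F},\mathbf{E})$ is continuous on $V_x$, hence on all of $X$. To obtain the sharper statement that the SOT is stronger than the compact-open topology, I cover any compact $K\subset X$ by finitely many sets $V_{x_1},\ldots,V_{x_n}$ (with associated $f_1,\ldots,f_n\in\mathbf{F}$) and on each piece estimate
$$\|\omega_\alpha - \omega\|_\infty^{V_{x_i}\cap K} \le 2\|M_{\omega_\alpha}f_i - M_\omega f_i\|_\infty^K \le 2 C_K \|M_{\omega_\alpha}f_i - M_\omega f_i\|_{\mathbf{E}},$$
where the second inequality uses that the inclusion $\mathbf{E}\hookrightarrow\Co(X)$ is continuous. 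SOT convergence of $M_{\omega_\alpha}$ drives the right side to zero for each $i$, so SOT convergence implies uniform convergence on $K$. Since the operator norm dominates the SOT, the norm on $Mult(\mathbf{F},\mathbf{E})$ also dominates the compact-open topology, and $Mult(\mathbf{F},\mathbf{E})$ is an NSCF.

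For part (ii), I appeal to Theorem \ref{barrell}(i): it suffices to verify $\overline{\B_{Mult(\mathbf{F},\mathbf{E})}}^{\Fo(X)}\subset \Co(X)$. Let $\omega_\alpha\to\omega$ pointwise with $\|M_{\omega_\alpha}\|\le 1$. For each $f\in\mathbf{F}$ the net $M_{\omega_\alpha}f$ lies in $\|f\|\B_{\mathbf{E}}$ and converges pointwise to $\omega f$, so $\omega f\in \|f\|\,\overline{\B_{\mathbf{E}}}^{\Fo(X)}\subset \Co(X)$ by Theorem \ref{barrell}(i) applied to $\mathbf{E}$. The local quotient $\omega=(\omega f)/f$ on $V_x$ then upgrades this to $\omega\in\Co(X)$.

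For part (iii), I use Theorem \ref{barrell}(ii) and show $\B_{Mult(\mathbf{F},\mathbf{E})}$ is equicontinuous at an arbitrary $x_0\in X$. Choosing $f$ with $f(x_0)=1$, on $V=\{|f|>1/2\}$ I split
$$\omega(x)-\omega(x_0) \;=\; \frac{(M_\omega f)(x)-(M_\omega f)(x_0)}{f(x)} \;+\; \omega(x_0)\cdot\frac{1-f(x)}{f(x)}.$$
The first summand is uniformly small near $x_0$ because $\|f\|\B_{\mathbf{E}}$ is equicontinuous by Theorem \ref{barrell}(ii) for $\mathbf{E}$. The main subtlety, and what I expect to be the only delicate point, is the second summand: it contains $\omega(x_0)$, which a priori varies with $\omega$. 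This is handled by observing that part (i) has already made $Mult(\mathbf{F},\mathbf{E})$ an NSCF, so $\B_{Mult(\mathbf{F},\mathbf{E})}$ is bounded in $\Co(X)$ and in particular $\sup_{\omega\in\B_{Mult(\mathbf{F},\mathbf{E})}}|\omega(x_0)|<\infty$; continuity of $f$ at $x_0$ then kills the second summand uniformly. This gives equicontinuity at $x_0$, and hence the compact embedding of $Mult(\mathbf{F},\mathbf{E})$ via Theorem \ref{barrell}(ii).
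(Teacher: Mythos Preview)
Your proof is correct and follows essentially the same approach as the paper: the local quotient $\omega = (M_\omega f)/f$ on neighborhoods where $|f|$ is bounded below, a finite-cover argument for compacts in (i), pointwise closure via Theorem~\ref{barrell}(i) in (ii), and equicontinuity via Theorem~\ref{barrell}(ii) in (iii). The only cosmetic difference is in (iii), where your decomposition isolates $\omega(x_0)$ (bounded uniformly via part (i)) while the paper's decomposition isolates $\omega(y)$ and bounds it directly by $(b+\eta)/(a-\delta)$ with $b=\|x_{\mathbf{E}}\|$; both routes lead to the same conclusion.
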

\begin{proof}
(i): The proof of the fact that $Mult\left(\mathbf{F},\mathbf{E}\right)\subset \Co\left(X\right)$ is analogous to \cite[Proposition 2.2]{erz3}. Let $K\subset X$ be compact. For every $x\in K$ let $f_{x}\in \mathbf{F}$ be such that $f_{x}\left(x\right)=2$, and let $U_{x}$ be a neighborhood of $x$ such that $\left|f_{x}\left(y\right)\right|>1$, for every $y\in U_{x}$. Since $K$ is compact, we can choose $x_{1},...,x_{n}$ such that $\bigcup\limits_{i=1}^{n}U_{x_{i}}=X$. Denote $f_{i}=f_{x_{i}}$ and $U_{i}=U_{x_{i}}$, for $i\in\overline{1,n}$. The seminorm $|||\omega|||=\sum\limits_{i=1}^{n}\|\omega f_{i}\|_{\mathbf{E}}$, $\omega\in Mult\left(\mathbf{F}, \mathbf{E}\right)$ is continuous with respect to the strong operator topology. Since $\mathbf{E}$ is a NSCF over $X$, there is $a>0$ such that $a\|\cdot\|_{\8}^{K}\le \|\cdot\|_{\mathbf{E}}$. Hence,
$$|||\omega|||\ge a\sum\limits_{i=1}^{n}\|\omega f_{i}\|_{\8}^{K}\ge a\sum\limits_{i=1}^{n}\|\omega f_{i}\|_{\8}^{K\cap U_{i}}\ge a\sum\limits_{i=1}^{n}\|\omega\|^{K\cap U_{i}}\ge a\bigvee_{i=1}^{n}\|\omega\|^{K\cap U_{i}}=a\|\omega\|^{K}_{\8},$$
and so $\|\cdot\|^{K}_{\8}$ is continuous on $Mult\left(\mathbf{F}, \mathbf{E}\right)$ with respect to the strong operator topology. Since $K$ was chosen arbitrarily, the strong operator topology on $Mult\left(\mathbf{F}, \mathbf{E}\right)$ is stronger than the compact-open topology. Since the norm topology is stronger than the strong operator topology, we conclude that $Mult\left(\mathbf{F}, \mathbf{E}\right)$ is a NSCF.\medskip

(ii): By definition, $\left\{\omega f\left|\omega\in \B_{Mult\left(\mathbf{F},\mathbf{E}\right)},~ f\in \B_{\mathbf{F}}\right.\right\}\subset \B_{\mathbf{E}}$. Since multiplication of functions is continuous on $\Fo\left(X\right)$ it follows that $$\left\{\omega f\left|\omega\in \overline{\B_{Mult\left(\mathbf{F},\mathbf{E}\right)}}^{\Fo\left(X\right)},~ f\in \B_{\mathbf{F}}\right.\right\}\subset \overline{\B_{\mathbf{E}}}^{\Fo\left(X\right)}\subset\Co\left(X\right),$$ where the latter inclusion follows from part (i) of Theorem \ref{barrell}. Let $\omega\in \overline{\B_{Mult\left(\mathbf{F},\mathbf{E}\right)}}^{\Fo\left(X\right)}$, let $x\in X$ and let $f\in \B_{\mathbf{F}}$ be such that $f\left(x\right)\ne 0$. Then, since both $\omega f$ and $ f$ are continuous, it follows that their quotient $\omega$ is continuous at $x$, and as $x$ was chosen arbitrarily we get that $\omega$ is continuous. Since $\omega$ was chosen arbitrarily, we conclude that $\overline{\B_{Mult\left(\mathbf{F},\mathbf{E}\right)}}^{\Fo\left(X\right)}\subset\Co\left(X\right)$, and so from part (i) of Theorem \ref{barrell}, $Mult\left(\mathbf{F},\mathbf{E}\right)$ is weakly compactly embedded.\medskip

(iii): From part (ii) of Theorem \ref{barrell} $\B_{\mathbf{E}}$ is equicontinuous, and since  $Mult\left(\mathbf{F},\mathbf{E}\right)$ is a NSCF, according to (i), in order to prove that it is compactly embedded, it is enough to show that $\B_{Mult\left(\mathbf{F},\mathbf{E}\right)}$ is equicontinuous. Let $\varepsilon>0$ and let $x\in X$. Denote $b=\|x_{\mathbf{E}}\|=\sup\limits_{g\in\B_{\mathbf{E}}}\left|g\left(x\right)\right|$ and let $f\in \B_{\mathbf{F}}$ be such that $f\left(x\right)=a>0$.

For fixed $\delta,\eta>0$ let $U$ be an open neighborhood of $x$ such that $\left|f\left(y\right)-f\left(x\right)\right|<\delta $ and $\left|g\left(y\right)-g\left(x\right)\right|<\eta $, for every $y\in U$ and $g\in \B_{\mathbf{E}}$. Then for every $\omega\in \B_{Mult\left(\mathbf{F},\mathbf{E}\right)}$ we have $g=\omega f\in \B_{\mathbf{E}}$, and so for $y\in U$ we get
\begin{align*}
\left(a-\delta \right)\left|\omega\left(y\right)\right|&\le\left(\left|f\left(x\right)\right|-\left|f\left(x\right)-f\left(y\right)\right|\right)\left|\omega\left(y\right)\right| \\ &\le\left|\omega\left(y\right)f\left(y\right)\right|=\left|g\left(y\right)\right|\le \left|g\left(x\right)\right|+\left|g\left(y\right)-g\left(x\right)\right|\le b+ \eta,\end{align*}
which yields $\left|\omega\left(y\right)\right|\le \frac{b+\eta }{a-\delta }$ on $U$. Therefore,
\begin{align*}
\eta &>\left|g\left(y\right)-g\left(x\right)\right|=\left|f\left(y\right)\omega\left(y\right)-f\left(x\right)\omega\left(x\right)\right|\\
&=\left|\left(f\left(y\right)-f\left(x\right)\right)\omega\left(y\right)+\left(\omega\left(y\right)-\omega\left(x\right)\right)f\left(x\right)\right|\\
&\ge \left|\omega\left(y\right)-\omega\left(x\right)\right|\left|f\left(x\right)\right|-\left|f\left(y\right)-f\left(x\right)\right|\left|\omega\left(y\right)\right|\ge a\left|\omega\left(y\right)-\omega\left(x\right)\right|-\delta\frac{b+\eta }{a-\delta },
\end{align*}
from where, setting $\eta= \frac{a\varepsilon}{3}$ and $\delta=\frac{a^{2}\varepsilon}{3b}\wedge \frac{a}{3}$ (if $b=0$ set $\delta= \frac{a}{3}$) we get
$$\left|\omega\left(y\right)-\omega\left(x\right)\right|< \frac{\eta}{a}+
\frac{\delta b+\delta\eta }{a\left(a-\delta \right)}=\frac{\delta b+ a\eta }{a\left(a-\delta \right)}\le \frac{a^{2}\varepsilon+ a^{2}\varepsilon }{2 a^{2}}= \varepsilon.$$

Thus, since $\varepsilon$, $x$ and $\omega$ were chosen arbitrarily, $\B_{Mult\left(\mathbf{F}\right)}$ is equicontinuous.
\end{proof}

Let us now discuss multiplier algebras of NSCF's. If $\mathbf{F}$ is a $1$-independent NSCF over $X$, then $Mult\left(\mathbf{F}\right)$ contractively embeds into $\Co_{\8}\left(X\right)$ (see \cite[Proposition 2.2]{erz3}), and it follows from Theorem \ref{mus} that if $\mathbf{F}$ is weakly compactly embedded, then so is $Mult\left(\mathbf{F}\right)$. If $X$ is compactly generated and $\mathbf{F}$ is compactly embedded, then so is $Mult\left(\mathbf{F}\right)$; the converses to the last two statements are false (see Example \ref{ahn}).

If $\dim Mult\left(\mathbf{F}\right)<\8$, then every multiplier of $\mathbf{F}$ is constant on every component of $X$. Indeed, if $Y$ is a component of $X$ and a (continuous) multiplier $\omega$ is not a constant on $Y$,  then $\omega\left(Y\right)$ is of infinite cardinality, and so $Mult\left(\mathbf{F}\right)$ separates infinite number of points. Hence, the point evaluations at these points are linearly independent due to Dedekind's theorem, and so $\dim Mult\left(\mathbf{F}\right)'=\8$.\medskip

The last observation motivates the following definition. We will call a NSCF \emph{anti-multiplicative} if it admits only constant multipliers. It follows that if $X$ is connected, a NSCF $\mathbf{F}$ over $X$ is anti-multiplicative as soon as $\dim Mult\left(\mathbf{F}\right)<\8$. Let us start with an example.

\begin{example}\label{ardy}
Consider the Hardy space $\Ho^{2}$ over $\D$ (see Example \ref{hardy}), and define functions $p_{n}:\D\to\C$, $n\in\N_{0}$ by $p_{0}\left(z\right)=e^{z}$ and $p_{n}\left(z\right)=z^{n}$, $n\in\N$. Let $\mathbf{F}=\spa\left\{ p_{n}, n\in\N_{0}\right\}$, which is a dense subspace of $\Ho^{2}$. Assume $\omega\in Mult\left(\mathbf{F}\right)\backslash\left\{0\right\}$. There are $a_{0},a_{1},...,a_{n}\in\C$ such that $\omega\cdot p_{1}=a_{0}p_{0}+a_{1}p_{1}+...+a_{n}p_{n}$. Then, since $0=\left[\omega\cdot p_{1}\right]\left(0\right)=a_{0}$ we conclude that $\omega=a_{1}+a_{2}p_{1}+...+a_{n}p_{n-1}$ is a polynomial. There are $b_{0},b_{1},...,b_{m}\in\C$ such that $\omega\cdot p_{0}=b_{0}p_{0}+b_{1}p_{1}+...+b_{m}p_{m}$, from where $\left(\omega-b_{0}\right)\cdot p_{0}$ is a polynomial. Since $p_{0}$ is not a rational function, it follows that $\left(\omega-b_{0}\right)\cdot p_{0}\equiv 0$, from where $\omega\equiv b_{0}$ is a constant function.
\qed\end{example}

To construct anti-multiplicative BSCF's we need an auxiliary result.

\begin{lemma}
Assume that $X$ is connected and let $\mathbf{F}$ be a $1$-independent BSCF over $X$ with $\dim \mathbf{F}=\8$. If $\mathbf{F}$ is such that every bounded operators on it is a sum of a scalar and a compact operator, then $\mathbf{F}$ is anti-multiplicative.
\end{lemma}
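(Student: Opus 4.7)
The plan is to take an arbitrary $\omega\in Mult(\mathbf{F})$ and prove that it must be a constant. The hypothesis on $\mathbf{F}$ applied to $M_{\omega}$ yields a decomposition $M_{\omega}=\lambda I+K$ with $\lambda\in\C$ and $K\in\Lo(\mathbf{F})$ compact. Writing $\psi=\omega-\lambda$, one gets $M_{\psi}=M_{\omega}-\lambda I=K$, so the problem reduces to showing that a multiplier $\psi$ whose multiplication operator $M_{\psi}$ is compact must vanish identically.

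The key observation is that every evaluation functional is an eigenvector of the adjoint: a direct computation (cf.\ Proposition \ref{char}(iv)) gives $M_{\psi}^{*}x_{\mathbf{F}}=\overline{\psi(x)}\,x_{\mathbf{F}}$ for every $x\in X$. Because $\mathbf{F}$ is $1$-independent, $x_{\mathbf{F}}\ne 0_{\mathbf{F}^{*}}$ for all $x$, so $\overline{\psi(X)}$ lies in the point spectrum of $M_{\psi}^{*}$. Since $M_{\psi}$ is compact, so is $M_{\psi}^{*}$, and hence its point spectrum is at most countable.

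Next, $\psi\in Mult(\mathbf{F})\subset \Co(X)$ by Theorem \ref{mus}(i), so $\psi$ is continuous; combined with the connectedness of $X$, the set $\psi(X)$ is a connected subset of $\C$. Any countable connected subset of $\C$ is a singleton, which one sees by projecting onto a real coordinate and invoking the fact that non-degenerate real intervals are uncountable. Therefore $\psi\equiv c$ for some constant $c\in\C$, which means $M_{\psi}=c\,I_{\mathbf{F}}$. Since $\dim\mathbf{F}=\infty$, the only scalar multiple of the identity that is compact is the zero operator, so $c=0$ and $\omega\equiv\lambda$, as desired.

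The entire argument hinges on the single observation that ``$M_{\psi}$ compact $\Rightarrow$ $\psi$ takes at most countably many values'' via the eigenvalue structure of point evaluations, after which the connectedness of $X$ and the infinite-dimensionality of $\mathbf{F}$ finish off the statement mechanically; I do not anticipate any substantive obstacle beyond isolating this observation.
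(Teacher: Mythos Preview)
Your argument is correct. One cosmetic slip: in the Banach-space setting the adjoint is the plain transpose, so $M_{\psi}^{*}x_{\mathbf{F}}=\psi(x)\,x_{\mathbf{F}}$ rather than $\overline{\psi(x)}\,x_{\mathbf{F}}$; this does not affect anything, since either way $\psi(X)$ sits inside the (countable) spectrum of a compact operator.

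The route, however, differs from the paper's. Both proofs start the same way --- write $M_{\omega}=\lambda I+K$ and observe that $K=M_{\omega-\lambda\1}$ --- but at the crucial step ``a compact multiplication operator on $\mathbf{F}$ must be zero'' the paper simply invokes \cite[Proposition 2.10]{erz3}, a general fact valid for any infinite-dimensional $1$-independent NSCF irrespective of the topology of $X$. You instead supply a self-contained proof of that step: point evaluations are eigenvectors of $M_{\psi}^{*}$, compactness forces $\psi(X)$ to be countable, and then you use the connectedness hypothesis to collapse $\psi(X)$ to a point. What you gain is a proof that does not leave the paper; what you lose is a little generality at that step (your argument for ``compact $M_{\psi}\Rightarrow\psi$ constant'' genuinely needs $X$ connected, whereas the cited proposition does not), though of course connectedness is already among the hypotheses of the lemma.
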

\begin{proof}
Let $\omega\in Mult\left(\mathbf{F}\right)$. Then, there is $\lambda\in\C$ such that $M_{\omega}-\lambda Id_{\mathbf{F}}$ is a compact operator on $\mathbf{F}$. But the latter operator is equal to $M_{\omega- \lambda\1}$. Since there can be no non-zero compact multiplication operator on an infinite-dimensional $1$-independent NSCF (see \cite[Proposition 2.10]{erz3}), it follows that $\omega\equiv\lambda$.
\end{proof}

\begin{proposition}\label{anm}
There is an anti-multiplicative compactly embedded $1$-independent infinite-dimensional BSCF over every connected separable metric space.
\end{proposition}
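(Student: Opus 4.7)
The plan is to combine Theorem \ref{real} with the preceding lemma, feeding in a Banach space whose bounded operators are all scalar-plus-compact. The candidate is the Argyros--Haydon space $F_{AH}$: a separable, infinite-dimensional Banach space on which every bounded linear operator has the form $\lambda Id + T$ with $T$ compact. This scalar-plus-compact property is an isometric invariant, so it will transfer to any isometric copy of $F_{AH}$ we produce on $X$.

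Concretely, given a connected separable metric space $\left(X,\rho\right)$, I would apply Theorem \ref{real} with $F=F_{AH}$. This yields a compactly embedded BSCF $\mathbf{F}$ over $X$ which is isometrically isomorphic to $F_{AH}$, consists of Lipschitz functions, and has linearly independent point evaluations. The linear independence of the point evaluations in particular forces $x_{\mathbf{F}}\ne 0$ for every $x\in X$, so $\mathbf{F}$ is $1$-independent. Since $\dim F_{AH}=\8$, we also have $\dim\mathbf{F}=\8$. Transporting the scalar-plus-compact decomposition through the isometric isomorphism $J\colon F_{AH}\to\mathbf{F}$, we see that every $S\in \Lo\left(\mathbf{F}\right)$ can be written as $\lambda Id_{\mathbf{F}}+T$ with $T$ compact on $\mathbf{F}$.

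With $X$ connected, $\mathbf{F}$ $1$-independent and infinite-dimensional, and every bounded operator on $\mathbf{F}$ a scalar plus compact operator, the hypotheses of the preceding lemma are satisfied, and we conclude that $\mathbf{F}$ is anti-multiplicative. This gives all the required properties simultaneously: anti-multiplicative, compactly embedded, $1$-independent, infinite-dimensional.

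I do not anticipate a genuine obstacle here: the real content lies in the existence of $F_{AH}$, which is an external input, and in Theorem \ref{real} and the preceding lemma, both already established. The only thing worth double-checking is that the property ``every bounded operator is scalar plus compact'' genuinely passes through an isometric isomorphism between Banach spaces, which is immediate since $J$ conjugates bounded operators to bounded operators and compact operators to compact operators.
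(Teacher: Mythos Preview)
Your proposal is correct and follows essentially the same route as the paper: use the Argyros--Haydon space as $F$ in Theorem \ref{real}, then invoke the preceding lemma. Your write-up supplies a bit more detail (why $1$-independence follows from the linear independence of point evaluations, and why the scalar-plus-compact property passes through an isomorphism), but the argument is the same.
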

\begin{proof}
Let $F$ be the Argyros-Haydon space (see \cite{ah}), which is an infinite-dimensional separable non-reflexive Banach space such that every bounded operator on it is a sum of a scalar and a compact operators. From Theorem \ref{real}, there is a compactly embedded $1$-independent BSCF over $X$, which is isometrically isomorphic to $F$. From the preceding lemma it is anti-multiplicative.
\end{proof}

Using a similar idea it is possible to construct a non-weakly-compactly embedded NSCF whose multiplier algebra is compactly embedded.

\begin{example}\label{ahn} Let $F$ be the Argyros-Haydon space and construct $X$ and $\mathbf{F}$ as in Example \ref{dual}. We get a non-weakly compactly embedded NSCF over a metrizable connected compact space, but $Mult\left(\mathbf{F}\right)$ is one-dimensional, and therefore compactly embedded.
\end{example}

Consider a curios example of a NSHF whose multiplier algebra isomorphic to a Hilbert space.

\begin{example}
Let $\mathbf{H}$ be a HSHF over $\D$ consisting of all $f\in\Ho\left(\D\right)$ such that $f'\in \Ho^{2}$, with the norm $\|f\|^{2}=\|f'\|^{2}_{\Ho^{2}}+\left|f\left(0\right)\right|^{2}$. Clearly, this is a Hilbert space, and point evaluations are bounded since $\left|f\left(z\right)\right|\le \left|f\left(z\right)-f\left(0\right)\right| +\left|f\left(0\right)\right|  \le \left|z\right|\|f'\|_{\8}^{\left[0,z\right]}+\left|f\left(0\right)\right|$, and the latter semi-norms are majorated by $\|f\|$. Obviously, $\1\in\mathbf{H}$.

Let us show that $\mathbf{H}$ is an algebra. Since $\Ho^{2}\subset \Ho^{1}$ it follows that $\mathbf{H}\subset A\left(\D\right)\subset \Ho^{\8}$ (see \cite[Theorem 3.11]{duren}). For $f,g\in \mathbf{H}$ we have $\left(fg\right)'=fg'+f'g$. Since $f,g\in \Ho^{\8}=Mult\left(\Ho^{2}\right)$ it follows that $fg', f'g\in \Ho^{2}$, and so $fg\in \mathbf{H}$. Thus, from Proposition \ref{subn} $\mathbf{H}=Mult\left(\mathbf{H}\right)$ (as topological vector spaces).
\qed\end{example}\bigskip

We conclude the section with some remarks regarding the multipliers of NSHF's. First, similarly to the continuous case, if $X$ is a domain in $\C^{n}$, $\mathbf{F}$ is a $1$-independent NSHF over $X$, and $\mathbf{E}$ is a NSHF over $X$, then $Mult\left(\mathbf{F},\mathbf{E}\right)\subset \Ho\left(X\right)$. For the multiplier algebras an even stronger fact is true (see \cite[Proposition 4.3]{erz1}).

\begin{proposition}\label{hh}Assume that $X$ is a domain in $\C^{n}$, and let $\mathbf{F}$ be a NSHF over $X$. Then, $Mult\left(\mathbf{F}\right)$ contractively embeds into $\Ho_{\8}\left(X\right)$ in the sense that if $T\in\Lo\left(\mathbf{F}\right)$ is a MO, there is a unique holomorphic $\omega:X\to\C$ such that $T=M_{\omega}$, for which also $\|\omega\|_{\8}\le\|M_{\omega}\|$.
\end{proposition}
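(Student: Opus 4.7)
The plan is to produce the holomorphic symbol by reading off the eigenvalue of $T^{*}$ on each nonzero point evaluation, and then to extend it across the locus where $\mathbf{F}$ fails to be $1$-independent using Riemann's removable singularities theorem.

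First, I would set $Y=\{x\in X:x_{\mathbf{F}}\ne 0\}$, dispose of the trivial $\mathbf{F}=\{0\}$ case separately, and fix a nonzero $f\in\mathbf{F}$. Then $X\setminus Y\subset f^{-1}(0)$ is contained in a proper analytic hypersurface of the connected domain $X$, so $Y$ is open and dense and its complement is thin in a precise sense. Given any $T\in Mult(\mathbf{F})$, Proposition \ref{char}(iv) supplies for each $x\in Y$ a unique scalar $\omega(x)$ with $T^{*}x_{\mathbf{F}}=\omega(x)\,x_{\mathbf{F}}$, and taking norms yields $|\omega(x)|\le\|T\|$ on $Y$. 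To see that $\omega$ is holomorphic on $Y$, locally near any $x_{0}\in Y$ I pick $g\in\mathbf{F}$ with $g(x_{0})\ne 0$; on the neighborhood where $g$ does not vanish, $\omega=Tg/g$ is a quotient of holomorphic functions.

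Next, since $\omega$ is holomorphic and bounded on $X\setminus f^{-1}(0)$, Riemann's extension theorem produces a unique $\tilde\omega\in\Ho(X)$ extending it, necessarily still satisfying $\|\tilde\omega\|_{\8}\le\|T\|$. To verify $T=M_{\tilde\omega}$, I observe that for every $g\in\mathbf{F}$ the holomorphic functions $Tg$ and $\tilde\omega\cdot g$ coincide on $Y$ by the definition of $\omega$, hence on all of $X$ by the identity theorem for the connected domain. Uniqueness of the holomorphic symbol follows from the same theorem: if $\omega_{1},\omega_{2}\in\Ho(X)$ both represent $T$, then $(\omega_{1}-\omega_{2})g\equiv 0$ for every $g\in\mathbf{F}$, so $\omega_{1}-\omega_{2}$ vanishes on $X\setminus f^{-1}(0)$ and therefore identically.

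The step I expect to require the most care is the extension across $X\setminus Y$: I produce a natural holomorphic symbol only on $Y$, and to obtain an element of $\Ho_{\8}(X)$ I need $X\setminus Y$ to be analytically thin. The key observation is that $X\setminus Y\subset f^{-1}(0)$ for any $f\in\mathbf{F}\setminus\{0\}$, placing it inside an analytic hypersurface, which is exactly what Riemann's theorem requires; once this reduction is made, the pointwise bound $|\omega|\le\|T\|$ on $Y$ transfers directly to $\|\tilde\omega\|_{\8}\le\|T\|$ and gives the claimed contractive embedding.
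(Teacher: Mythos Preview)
The paper does not provide its own proof of this proposition; it merely cites \cite[Proposition 4.3]{erz1}. Your argument is correct and follows the natural route: produce $\omega$ on the $1$-independent locus $Y$ from the eigenvalue relation $T^{*}x_{\mathbf{F}}=\omega(x)\,x_{\mathbf{F}}$, obtain the bound $|\omega|\le\|T\|$ and local holomorphy via $\omega=Tg/g$, then extend across the thin set $X\setminus Y\subset f^{-1}(0)$ by the several-variable Riemann removable singularities theorem, and close with the identity theorem on the connected domain $X$.

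One cosmetic point: when verifying $T=M_{\tilde\omega}$ you assert that $Tg$ and $\tilde\omega\cdot g$ agree on all of $Y$, whereas your construction of $\tilde\omega$ only directly guarantees $\tilde\omega=\omega$ on $X\setminus f^{-1}(0)\subset Y$. This is harmless, since agreement of the two holomorphic functions $Tg$ and $\tilde\omega\cdot g$ on the nonempty open set $X\setminus f^{-1}(0)$ already forces equality on the connected domain $X$; alternatively, having shown $\omega$ is holomorphic on all of $Y$, you could apply Riemann extension directly with the removable set $X\setminus Y$ rather than the larger $f^{-1}(0)$, so that $\tilde\omega|_{Y}=\omega$ by construction.
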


Since $\Ho_{\8}\left(\C^{n}\right)=\C$, applying Proposition \ref{tvi} to the monomials we get the following result.

\begin{corollary}\label{anm2}
Every NSHF over $\C^{n}$ is anti-multiplicative. For every separable Banach space $F$ there is an anti-multiplicative $1$-independent BSCF isometrically isomorphic to $F$.
\end{corollary}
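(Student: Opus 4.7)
The plan is to derive both assertions from results already in place. For the first statement, let $\mathbf{F}$ be a NSHF over $\C^{n}$. By Proposition \ref{hh}, $Mult\left(\mathbf{F}\right)$ contractively embeds into $\Ho_{\8}\left(\C^{n}\right)$. But by Liouville's theorem (applied coordinatewise, or by a standard several-variable version) every bounded entire function on $\C^{n}$ is constant, so $\Ho_{\8}\left(\C^{n}\right)=\C\1$. Hence every multiplier of $\mathbf{F}$ is constant, i.e.\ $\mathbf{F}$ is anti-multiplicative.

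For the second statement, I would take $X=\C^{n}$ and use Proposition \ref{tvi} with the sequence $\left\{g_{k}\right\}_{k\in\N}$ consisting of an enumeration of all monomials $z_{1}^{\alpha_{1}}\cdots z_{n}^{\alpha_{n}}$ in which the constant function $\1$ appears as one of the $g_{k}$. As noted in the earlier examples, this sequence is very independent, and since $\C^{n}$ is $\sigma$-compact and locally compact we may take $U_{k}=\B\left(0,k\right)$ to see that it is tempered. Picking any positive scalars $\left\{b_{k}\right\}_{k\in\N}$ with $\sum_{k}\|g_{k}\|_{\8}^{U_{k}}/b_{k}<+\8$ (e.g.\ $b_{k}=2^{k}\|g_{k}\|_{\8}^{U_{k}}$), Proposition \ref{tvi} produces a compactly embedded BSCF $\mathbf{F}$ over $\C^{n}$, isometrically isomorphic to $F$, and satisfying $\spac\left\{g_{k}\right\}_{k\in\N}=\mathbf{F}\subset \spac^{\Co\left(\C^{n}\right)}\left\{g_{k}\right\}_{k\in\N}$.

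Since $\Ho\left(\C^{n}\right)$ is closed in $\Co\left(\C^{n}\right)$ under the compact-open topology, the last inclusion forces $\mathbf{F}\subset\Ho\left(\C^{n}\right)$, so $\mathbf{F}$ is actually a NSHF over $\C^{n}$. Because $\1$ is one of the $g_{k}$, we have $\1\in\mathbf{F}$, and therefore $x_{\mathbf{F}}\ne 0_{\mathbf{F}^{*}}$ for every $x\in\C^{n}$; that is, $\mathbf{F}$ is $1$-independent. Applying the first part of the corollary to $\mathbf{F}$ yields that $\mathbf{F}$ is anti-multiplicative, completing the proof.

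I do not foresee a genuine obstacle here; the two mild points to verify are that monomials form a very independent tempered family on $\C^{n}$ (immediate from the two examples preceding Proposition \ref{tvi}) and that taking the compact-open closure of polynomials keeps us inside $\Ho\left(\C^{n}\right)$ (a standard Weierstrass-type fact). Everything else is a direct invocation of Proposition \ref{tvi}, Proposition \ref{hh}, and Liouville's theorem.
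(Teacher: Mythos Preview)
Your proof is correct and follows essentially the same route as the paper: the first claim comes from Proposition~\ref{hh} together with Liouville's theorem, and the second from applying Proposition~\ref{tvi} to the monomials on $\C^{n}$ (which are very independent and tempered, as the paper notes in the examples preceding that proposition), observing that the resulting $\mathbf{F}$ lies in $\Ho\left(\C^{n}\right)$ and contains $\1$. The paper states exactly this in one line; you have simply spelled out the details.
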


Note that our examples of anti-multiplicative NSCF's exploit some kind of pathologies either of the phase space, or the normed space. It is natural to ask if one can find an example where both are as well-behaved as possible.

\begin{question}
Does there exist an anti-multiplicative HSHF over $\D$?
\end{question}

\section{Subalgebras of the multiplier algebras}

For the purposes of this section we need to introduce an additional property of NSF's. Let $\mathbf{F}$ be a NSF over a set $X$. Recall that $\overline{\B_{\mathbf{F}}}^{\Fo\left(X\right)}$ is the closure of $\B_{\mathbf{F}}$ in $\Fo\left(X\right)$, which is bounded, closed, convex and balanced.  Hence, $\widehat{\mathbf{F}}=\left\{\alpha f\left|\alpha>0,~ f\in \overline{\B_{\mathbf{F}}}^{\Fo\left(X\right)}\right.\right\}$ is a NSF over $X$ with the closed unit ball $\overline{\B_{\mathbf{F}}}^{\Fo\left(X\right)}$. One can show that $\widehat{\mathbf{F}}=\left(\spa \left\{x_{\mathbf{F}}\left|x\in X\right.\right\}\right)^{*}$ (as normed spaces) via the bilinear form induced by $\left<x_{\mathbf{F}},f\right>=f\left(x\right)$, and moreover, $\overline{\B_{\mathbf{F}}}^{\Fo\left(X\right)}= J_{\mathbf{F}}^{**}\Bo_{\mathbf{F}^{**}}$ (see \cite[Theorem 2.3]{erz2} and its proof in the case when $X$ is a discrete topological space). We will say that $\mathbf{F}$ is \emph{regular} if $\widehat{\mathbf{F}}=\mathbf{F}$ (as normed spaces), i.e. $\Bo_{\mathbf{F}}$ is closed in $\Fo\left(X\right)$.

While regularity of a NSF can be viewed as a type of maximality, the opposite property is to have point evaluations dense in the dual (as was mentioned in Remark \ref{sde}, $J_{\mathbf{F}}^{**}$ is injective if and only if $\spac \left\{x_{\mathbf{F}}\left|x\in X\right.\right\}=\mathbf{F}^{*}$). Unsurprisingly, these two properties combined are equivalent to reflexivity.

\begin{proposition}A NSF $\mathbf{F}$ over a set $X$ is reflexive if and only if it is regular and $\spac \left\{x_{\mathbf{F}}\left|x\in X\right.\right\}=\mathbf{F}^{*}$.
\end{proposition}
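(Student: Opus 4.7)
The plan is to use two earlier-stated facts about any NSF $\mathbf{F}$ over $X$: by Remark \ref{sde}, $J_{\mathbf{F}}^{**}$ is injective if and only if $\spac\left\{x_{\mathbf{F}}\left|x\in X\right.\right\}=\mathbf{F}^{*}$; and by the discussion preceding the statement, $\overline{\B_{\mathbf{F}}}^{\Fo\left(X\right)}=J_{\mathbf{F}}^{**}\Bo_{\mathbf{F}^{**}}$. Let $\iota:\mathbf{F}\to \mathbf{F}^{**}$ denote the canonical isometric embedding; evaluating at any $x$ gives $J_{\mathbf{F}}^{**}\circ \iota = J_{\mathbf{F}}$, so upon identifying $\mathbf{F}$ with $J_{\mathbf{F}}\mathbf{F}\subset \Fo\left(X\right)$, we have $J_{\mathbf{F}}^{**}\iota\Bo_{\mathbf{F}}=\Bo_{\mathbf{F}}$.

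For the forward direction, assume $\mathbf{F}$ is reflexive. Then $\iota$ is surjective with $\iota\Bo_{\mathbf{F}}=\Bo_{\mathbf{F}^{**}}$, hence $\overline{\B_{\mathbf{F}}}^{\Fo\left(X\right)}=J_{\mathbf{F}}^{**}\Bo_{\mathbf{F}^{**}}=J_{\mathbf{F}}^{**}\iota\Bo_{\mathbf{F}}=\Bo_{\mathbf{F}}$, so $\mathbf{F}$ is regular. Moreover, $J_{\mathbf{F}}$ is injective (as $\mathbf{F}\subset \Fo\left(X\right)$), and under the identification of $\mathbf{F}^{**}$ with $\mathbf{F}$ via $\iota$, the operator $J_{\mathbf{F}}^{**}$ coincides with $J_{\mathbf{F}}$; thus $J_{\mathbf{F}}^{**}$ is injective, and Remark \ref{sde} gives $\spac\left\{x_{\mathbf{F}}\left|x\in X\right.\right\}=\mathbf{F}^{*}$.

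For the reverse direction, assume $\mathbf{F}$ is regular and $\spac\left\{x_{\mathbf{F}}\left|x\in X\right.\right\}=\mathbf{F}^{*}$. Regularity means $\overline{\B_{\mathbf{F}}}^{\Fo\left(X\right)}=\Bo_{\mathbf{F}}$, so combining with the identity $\overline{\B_{\mathbf{F}}}^{\Fo\left(X\right)}=J_{\mathbf{F}}^{**}\Bo_{\mathbf{F}^{**}}$ yields $J_{\mathbf{F}}^{**}\Bo_{\mathbf{F}^{**}}=\Bo_{\mathbf{F}}=J_{\mathbf{F}}^{**}\iota\Bo_{\mathbf{F}}$. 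By Remark \ref{sde}, the hypothesis on point evaluations makes $J_{\mathbf{F}}^{**}$ injective, so cancelling gives $\Bo_{\mathbf{F}^{**}}=\iota\Bo_{\mathbf{F}}$. Scaling by positive reals then shows $\iota$ is surjective, i.e.\ $\mathbf{F}$ is reflexive.

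There is no serious obstacle here; the only delicate point is keeping the two distinct uses of the identification between $\mathbf{F}$ and its image straight (once inside $\Fo\left(X\right)$ via $J_{\mathbf{F}}$, and once inside $\mathbf{F}^{**}$ via $\iota$), so that the equation $J_{\mathbf{F}}^{**}\circ \iota = J_{\mathbf{F}}$ can be applied to cancel $J_{\mathbf{F}}^{**}$ on both sides.
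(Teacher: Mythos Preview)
Your proof is correct. The paper states this proposition without proof, evidently regarding it as an immediate consequence of the two facts set up in the preceding paragraph---the identity $\overline{\B_{\mathbf{F}}}^{\Fo\left(X\right)}=J_{\mathbf{F}}^{**}\Bo_{\mathbf{F}^{**}}$ and the equivalence from Remark~\ref{sde}---and your argument is precisely the deduction the paper leaves implicit.
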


It follows from part (i) of Theorem \ref{barrell} that a NSCF $\mathbf{F}$ over a topological space $X$ is regular if and only if it is weakly compactly embedded and $\Bo_{\mathbf{F}}$ is closed in $\Co\left(X\right)$. It also follows from the preceding proposition that every reflexive NSCF over a separable topological space is separable.

\begin{example}
Since $\mathbf{F}$ from Example \ref{dual} is not weakly compactly embedded, it is not regular. On the other hand, $\overline{\B_{\mathbf{F}}}^{\Fo\left(X\right)}\cap \Co\left(X\right)=\Bo_{\mathbf{F}}$ is closed in $\Co\left(X\right)$.
\qed\end{example}\smallskip

\begin{example}\label{wuh}
If $X$ is a domain in $\C^{n}$, let $\Ho_{u}^{0}$ be the subspace of $\Ho_{u}^{\8}$ that consists of functions $f$ such that $\left|uf\right|$ vanishes at infinity. Under some mild conditions $\left(\Ho^{0}_{v}\right)^{**}=\Ho^{\8}_{v}$ with $J_{\Ho^{\8}_{v}}=J_{\Ho^{0}_{v}}^{**}$ (see \cite{bor} and the reference therein), and since in this case $\Ho^{0}_{v}\ne \Ho^{\8}_{v}$, it follows that the former is not regular, while the latter is.
\qed\end{example}\smallskip

For $X\subset\C^{n}$ let $\Po\left(X\right)$ be the linear space of polynomials of $n$ variable, viewed as functions on $X$.

\begin{example}\label{ad}
Let $X$ be a bounded domain in $\C^{n}$. Under some assumptions about $X$ we have $\overline{\B_{A\left(X\right)}}^{\Fo\left(X\right)}=\Bo_{\Ho_{\8}\left(X\right)}$ (see e.g. \cite[Theorem 6.4, and Remark 6.5]{range} and \cite{dgg}), and in particular $A\left(X\right)$ is not regular. The simplest example is when $X$ is convex, so that every $f\in \Bo_{\Ho_{\8}\left(X\right)}$ is approximated by $f_{n}\in   \Bo_{A\left(X\right)}$ defined by $f_{n}\left(X\right)=f\left(\frac{nx}{n+1}\right)$. Since polynomials are dense in $A\left(X\right)$ it follows that $\overline{\B_{\Po\left(X\right)}}^{\Fo\left(X\right)}=\Bo_{\Ho_{\8}\left(X\right)}$.\qed\end{example}\smallskip

It turns out that regularity is inherited by the multiplier spaces.

\begin{proposition}\label{muw2}Let $\mathbf{F}$ and $\mathbf{E}$ be NSF's over a set $X$. If $\mathbf{F}$ is $1$-independent, then:
\item[(i)]  If $\mathbf{E}$ is regular, then so is $Mult\left(\mathbf{F}, \mathbf{E}\right)$.
\item[(ii)] $Mult\left(\mathbf{F}, \mathbf{E}\right)\subset Mult\left(\widehat{\mathbf{F}}, \widehat{\mathbf{E}}\right)$, and the inclusion is contractive.
\end{proposition}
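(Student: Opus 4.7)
The plan is to exploit the description of $\Bo_{\widehat{\mathbf{F}}}$ as the pointwise closure $\overline{\B_{\mathbf{F}}}^{\Fo(X)}$ (and likewise for $\mathbf{E}$), combined with the fact that for each fixed $\omega\in\Fo(X)$ the map $g\mapsto\omega g$ is continuous on $\Fo(X)$ in the pointwise topology. Both parts then reduce to a standard pointwise-convergence argument on balls.

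For (i), by Proposition \ref{muw}(i) we know $Mult(\mathbf{F},\mathbf{E})$ is a NSF, so regularity amounts to showing that $\Bo_{Mult(\mathbf{F},\mathbf{E})}$ is closed in $\Fo(X)$. I would take a net $\{\omega_{i}\}\subset\Bo_{Mult(\mathbf{F},\mathbf{E})}$ with $\omega_{i}\to\omega$ pointwise and show $\omega$ is also in the ball. Fix $f\in\B_{\mathbf{F}}$: then $\omega_{i}f\in\Bo_{\mathbf{E}}$ for every $i$ (because $\|M_{\omega_{i}}\|\le 1$), and separate continuity of multiplication in $\Fo(X)$ gives $\omega_{i}f\to\omega f$ pointwise. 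Regularity of $\mathbf{E}$, which says $\Bo_{\mathbf{E}}=\overline{\B_{\mathbf{E}}}^{\Fo(X)}$, then forces $\omega f\in\Bo_{\mathbf{E}}$. Since $f\in\B_{\mathbf{F}}$ was arbitrary, $M_{\omega}$ has operator norm at most $1$, i.e.\ $\omega\in\Bo_{Mult(\mathbf{F},\mathbf{E})}$.

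For (ii), I would first note that $\mathbf{F}\subset\widehat{\mathbf{F}}$ so $\widehat{\mathbf{F}}$ is also $1$-independent, hence $Mult(\widehat{\mathbf{F}},\widehat{\mathbf{E}})$ is a well-defined normed space of functions by Proposition \ref{muw}(i). Given $\omega\in Mult(\mathbf{F},\mathbf{E})$ with $\|\omega\|_{Mult(\mathbf{F},\mathbf{E})}=c$, I want to check that as an operator $\widehat{\mathbf{F}}\to\widehat{\mathbf{E}}$ the multiplier $\omega$ has norm at most $c$. Take $f\in\Bo_{\widehat{\mathbf{F}}}=\overline{\B_{\mathbf{F}}}^{\Fo(X)}$ and pick a net $\{f_{i}\}\subset\B_{\mathbf{F}}$ with $f_{i}\to f$ pointwise. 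Then $\omega f_{i}\in c\Bo_{\mathbf{E}}\subset c\Bo_{\widehat{\mathbf{E}}}$ and $\omega f_{i}\to\omega f$ pointwise, whence $\omega f\in c\,\overline{\B_{\mathbf{E}}}^{\Fo(X)}=c\Bo_{\widehat{\mathbf{E}}}$. This shows both that $M_{\omega}$ maps $\widehat{\mathbf{F}}$ into $\widehat{\mathbf{E}}$ and that its operator norm is at most $c$, giving the contractive inclusion.

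The main obstacle is conceptual rather than technical: one must be comfortable reading the closed unit ball of the hatted space as the pointwise closure of the original ball (and this is precisely the content of the identification $\overline{\B_{\mathbf{F}}}^{\Fo(X)}=J_{\mathbf{F}}^{**}\Bo_{\mathbf{F}^{**}}$ recalled at the start of the section). Once that identification is in place, both parts become essentially the same two-line argument about taking pointwise limits on a ball; no further subtlety about completeness, duality, or net versus sequential convergence arises because multiplication by a fixed function is continuous in $\Fo(X)$.
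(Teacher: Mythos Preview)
Your proof is correct and follows essentially the same approach as the paper's. The paper phrases both parts more abstractly---writing $\Bo_{Mult(\mathbf{F},\mathbf{E})}=\bigcap_{f\in\Bo_{\mathbf{F}}}\{\omega:\omega f\in\Bo_{\mathbf{E}}\}$ as an intersection of preimages of a closed set under continuous maps for (i), and invoking ``continuous maps send closures into closures'' for (ii)---whereas you unwind these statements via nets; the underlying idea (continuity of multiplication on $\Fo(X)$ applied to the balls) is identical.
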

\begin{proof}
(i): Since multiplication is a continuous operation on $\Fo\left(X\right)$, it follows that if $\Bo_{\mathbf{E}}$ is closed in $\Fo\left(X\right)$, then $\Bo_{Mult\left(\mathbf{F},\mathbf{E}\right)}=\bigcap\limits_{f\in \Bo_{\mathbf{F}}} \left\{\omega\in \Fo\left(X\right)\left|\omega\cdot f\in \Bo_{\mathbf{E}}\right.\right\}$ is also closed.\medskip

(ii): If $\|\omega\|_{Mult\left(\mathbf{F},\mathbf{E}\right)}\le 1$, then $M_{\omega}\Bo_{\mathbf{F}}\subset \Bo_{\mathbf{E}}$. Since $M_{\omega}$ is a continuous operator on $\Fo\left(X\right)$, we have $M_{\omega}\overline{\B_{\mathbf{F}}}^{\Fo\left(X\right)}\subset\overline{\B_{\mathbf{E}}}^{\Fo\left(X\right)}$, from where $\|\omega\|_{Mult\left(\widehat{\mathbf{F}}, \widehat{\mathbf{E}}\right)}\le 1$. Hence, $Mult\left(\mathbf{F}, \mathbf{E}\right)$ contractively embeds into $Mult\left(\widehat{\mathbf{F}}, \widehat{\mathbf{E}}\right)$.
\end{proof}

Let $\mathbf{F}$ be a $1$-independent NSF over a set $X$. If $\mathbf{F}$ is regular, then so is $Mult\left(\mathbf{F}\right)$. The converse is false as demonstrated by the little Weighted space of holomorphic functions, which is a non-regular BSHF whose multiplier algebra is a regular NSHF $\Ho_{\8}\left(X\right)$.

It follows that $Mult\left(\mathbf{F}\right)\subset Mult\left(\widehat{\mathbf{F}}\right)$, contractively, and the latter is regular. Hence, $\1\in Mult \left(Mult\left(\mathbf{F}\right), Mult\left(\widehat{\mathbf{F}}\right)\right)\subset Mult \left(\widehat{Mult\left(\mathbf{F}\right)}, \widehat{Mult\left(\widehat{\mathbf{F}}\right)}\right)$, and so $\widehat{Mult\left(\mathbf{F}\right)}\subset \widehat{Mult\left(\widehat{\mathbf{F}}\right)}=Mult\left(\widehat{\mathbf{F}}\right)$ contractively. This observation motivates the following question.

\begin{question}
Is it true that $Mult\left(\widehat{\mathbf{F}}\right)=\widehat{Mult\left(\mathbf{F}\right)}$, for a $1$-independent BSF $\mathbf{F}$?
\end{question}

Note that without completeness the answer is negative as demonstrated by $\mathbf{F}$ from Example \ref{ardy}:  $Mult\left(\mathbf{F}\right)$ consists of constant functions, while the multiplier algebra of $\widehat{\mathbf{F}}=\Ho^{2}$ is $\Ho^{\8}$.\bigskip

Let $X$ be a set, let $\omega_{1},...,\omega_{n}:X\to\C$ and let $\vec{\omega}=\left(\omega_{1},...,\omega_{n}\right):X\to\C^{n}$. If $Y\subset \C^{n}$ is such that $\vec{\omega}\left(X\right)\subset Y$, define the composition operator $C_{\vec{\omega}}:\Fo\left(Y\right)\to\Fo\left(X\right)$ by $C_{\vec{\omega}}g=g\circ\vec{\omega}$. Clearly, $C_{\vec{\omega}}$ is continuous with respect to the pointwise topologies on these spaces. If $\omega_{1},...,\omega_{n}$ all belong to a certain algebra $\mathbf{E}$ of functions, then $C_{\vec{\omega}}$ is a homomorphism from $\Po\left(Y\right)$ into $\mathbf{E}$. Let us study this map in the case of a multiplier algebra.

\begin{proposition}\label{vna}
Let $\mathbf{F}$ be a regular $1$-independent NSF over a set $X$ and let $\omega_{1},...,\omega_{n}\in Mult\left(\mathbf{F}\right)$. Assume that $Y\subset \C^{n}$ is a domain as described in Example \ref{ad} and such that $\vec{\omega}\left(X\right)\subset \overline{Y}$. Then:
\item[(i)] If $C_{\vec{\omega}}:\Po\left(\overline{Y}\right)\to Mult\left(\mathbf{F}\right)$ is bounded with respect to $\|\cdot\|_{\8}^{\overline{Y}}$, then $C_{\vec{\omega}}$ is a bounded operator from $A\left(Y\right)$ into $Mult\left(\mathbf{F}\right)$.
\item[(ii)] If additionally $\vec{\omega}\left(X\right)\subset Y$, then $C_{\vec{\omega}}$ is a bounded operator from $\Ho_{\8}\left(Y\right)$ into $Mult\left(\mathbf{F}\right)$.
\end{proposition}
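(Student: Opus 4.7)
The plan is to extend the composition operator $C_{\vec{\omega}}$ from polynomials up to $A(Y)$ and then to $\Ho_{\8}(Y)$ by invoking the two density statements in Example~\ref{ad} together with the regularity of $Mult(\mathbf{F})$. First observe that $\mathbf{F}$ being regular makes it a dual normed space (via $\widehat{\mathbf{F}}=(\spa\{x_{\mathbf{F}}\,|\,x\in X\})^{*}$), hence Banach, so Corollary~\ref{wot} guarantees that $Mult(\mathbf{F})$ is Banach, while Proposition~\ref{muw2}(i) transfers regularity, so $\Bo_{Mult(\mathbf{F})}$ is closed in $\Fo(X)$ under the pointwise topology.

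For part (i), let $M$ be the operator norm of the hypothesized bounded map $C_{\vec{\omega}}:(\Po(\overline{Y}),\|\cdot\|_{\8}^{\overline{Y}})\to Mult(\mathbf{F})$. Given $g\in A(Y)$, choose polynomials $p_k\to g$ uniformly on $\overline{Y}$ (possible by Example~\ref{ad}). The estimate $\|C_{\vec{\omega}}p_j-C_{\vec{\omega}}p_k\|_{Mult(\mathbf{F})}\le M\|p_j-p_k\|_{\8}^{\overline{Y}}$ shows that $\{C_{\vec{\omega}}p_k\}$ is Cauchy in $Mult(\mathbf{F})$ and converges in norm to some $\omega\in Mult(\mathbf{F})$. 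By Proposition~\ref{muw}(i) the norm topology of $Mult(\mathbf{F})$ dominates the pointwise topology, so $p_k\circ\vec{\omega}\to\omega$ pointwise on $X$. Since $\vec{\omega}(X)\subset\overline{Y}$ and $p_k\to g$ uniformly on $\overline{Y}$, we also have $p_k\circ\vec{\omega}\to g\circ\vec{\omega}$ pointwise on $X$, so $\omega=C_{\vec{\omega}}g$ and $\|C_{\vec{\omega}}g\|_{Mult(\mathbf{F})}\le M\|g\|_{\8}^{\overline{Y}}$.

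For part (ii), fix $g\in\Bo_{\Ho_{\8}(Y)}$. Example~\ref{ad} gives $\Bo_{\Ho_{\8}(Y)}=\overline{\B_{A(Y)}}^{\Fo(Y)}$, so there is a net $(g_\alpha)\subset\Bo_{A(Y)}$ with $g_\alpha\to g$ pointwise on $Y$. The hypothesis $\vec{\omega}(X)\subset Y$ transfers this to pointwise convergence $C_{\vec{\omega}}g_\alpha\to C_{\vec{\omega}}g$ on $X$. By part~(i), $C_{\vec{\omega}}g_\alpha\in M\Bo_{Mult(\mathbf{F})}$, and this set is closed in $\Fo(X)$ by regularity of $Mult(\mathbf{F})$; hence the pointwise limit $C_{\vec{\omega}}g$ also lies in $M\Bo_{Mult(\mathbf{F})}$, giving $\|C_{\vec{\omega}}g\|_{Mult(\mathbf{F})}\le M\|g\|_{\8}^{Y}$.

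The main conceptual point is the choice of topology in which to pass to the limit. In (i), completeness of $Mult(\mathbf{F})$ together with its inclusion into $\Fo(X)$ supplies a sequential extension from $\Po(\overline{Y})$ to $A(Y)$. In (ii), the $A(Y)$-approximation of elements of $\Ho_{\8}(Y)$ is available only in the pointwise topology and typically only via nets, so completeness of $Mult(\mathbf{F})$ is insufficient; what is needed is exactly regularity of $Mult(\mathbf{F})$, which ensures that $\Bo_{Mult(\mathbf{F})}$ is closed in the right topology to absorb such limits.
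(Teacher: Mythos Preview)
Your proof is correct and follows essentially the same strategy as the paper's: transfer regularity from $\mathbf{F}$ to $Mult(\mathbf{F})$ via Proposition~\ref{muw2}(i), then use the pointwise continuity of $C_{\vec{\omega}}$ together with the approximation statements of Example~\ref{ad} to land inside a fixed multiple of the (pointwise-closed) unit ball of $Mult(\mathbf{F})$. The only difference is cosmetic: for (i) you argue by norm density of polynomials in $A(Y)$ and completeness of $Mult(\mathbf{F})$ (your observation that regularity of $\mathbf{F}$ forces completeness is a nice touch), whereas the paper handles (i) and (ii) uniformly via pointwise closure, viewing $A(Y)$ as a NSF over $\overline{Y}$.
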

\begin{proof}
Let us start with (ii). As was mentioned in Example \ref{ad} we have $\overline{\B_{\Po\left(Y\right)}}^{\Fo\left(Y\right)}=\Bo_{\Ho_{\8}\left(Y\right)}$. From our assumption $C_{\vec{\omega}}\B_{\Po\left(Y\right)}\subset \alpha \Bo_{Mult\left(\mathbf{F}\right)}$, for some $\alpha$. Since $\mathbf{F}$ is regular, from part (i) of Proposition \ref{muw2} the same is true about $Mult\left(\mathbf{F}\right)$, and so $\alpha \Bo_{Mult\left(\mathbf{F}\right)}$ is pointwise compact. Recall that $C_{\vec{\omega}}$ is continuous with respect to the pointwise topology, and so $C_{\vec{\omega}}\Bo_{\Ho_{\8}\left(Y\right)}\subset \overline{C_{\vec{\omega}}\B_{\Po\left(Y\right)}} ^{\Fo\left(X\right)} \subset \alpha \Bo_{Mult\left(\mathbf{F}\right)}$. Hence, $C_{\vec{\omega}}\Ho_{\8}\left(Y\right)\subset Mult\left(\mathbf{F}\right)$, and so from the Closed Graph theorem, the claim follows.

The proof of (i) is done similarly, but viewing $A\left(Y\right)$ as a NSF over $\overline{Y}$.
\end{proof}

\begin{remark}\label{vna1}
In the specific case when $X=Y$ and $\omega_{i}$ are the coordinate functions, we get that $\Ho_{\8}\left(X\right)\subset Mult\left(\mathbf{F}\right)$.
\qed\end{remark}

Using von Neumann's or Ando's inequality (see \cite[Theorem 1.2]{pisier}), or it's weaker version for more than $2$ variables (see \cite{lubin}) we can get the following corollary.

\begin{corollary}\label{ando}
Let $\mathbf{H}$ be a $1$-independent HSF over a set $X$ and let $\omega_{1},...,\omega_{n}\in \Bo_{Mult\left(\mathbf{H}\right)}$. If $n=1$ or $n=2$, then $C_{\vec{\omega}}$ is a contraction from $A\left(\D^{n}\right)$ into $Mult\left(\mathbf{H}\right)$. If $n>2$, then $C_{\vec{\omega}}$ is a contraction from $A\left(\sqrt{n}\D^{n}\right)$ into $Mult\left(\mathbf{H}\right)$.
\end{corollary}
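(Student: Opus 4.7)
The plan is to first establish the desired inequality on polynomials via the Hilbert space functional calculus together with the classical von Neumann/Ando/Lubin inequality, and then extend to $A(Y)$ by Proposition \ref{vna}(i), where $Y=\D^{n}$ if $n\le 2$ and $Y=\sqrt{n}\D^{n}$ if $n>2$. I first record three preliminaries that set up the application of \ref{vna}(i): (a) since $\mathbf{H}$ is a Hilbert space it is reflexive, hence regular, and by Proposition \ref{muw2}(i) so is $Mult(\mathbf{H})$; in particular $\Bo_{Mult(\mathbf{H})}$ is closed in $\Fo(X)$ in the pointwise topology. (b) The assumption $\omega_{i}\in\Bo_{Mult(\mathbf{H})}$ says precisely that $M_{\omega_{1}},\ldots,M_{\omega_{n}}$ are commuting contractions on the Hilbert space $\mathbf{H}$. (c) The contractive embedding $Mult(\mathbf{H})\hookrightarrow \Fo_{\8}(X)$ gives $\|\omega_{i}\|_{\8}\le 1$, so $\vec{\omega}(X)\subset\overline{\D}^{n}\subset \overline{Y}$, ensuring that the hypothesis $\vec{\omega}(X)\subset\overline{Y}$ of Proposition \ref{vna}(i) holds.

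Next, for any $p\in\Po(\C^{n})$, a direct computation on monomials (and linearity) yields the identity
$$M_{C_{\vec{\omega}}(p)}=M_{p\circ\vec{\omega}}=p(M_{\omega_{1}},\ldots,M_{\omega_{n}}),$$
where the right-hand side denotes the polynomial expression in the commuting contractions $M_{\omega_{i}}$. By von Neumann's inequality when $n=1$, Ando's inequality when $n=2$, and the inequality of \cite{lubin} when $n>2$, this operator has norm at most $\|p\|_{\8}^{\overline{Y}}$. Since the multiplier norm is the operator norm of the corresponding multiplication operator, we conclude that
$$\|C_{\vec{\omega}}(p)\|_{Mult(\mathbf{H})}=\|p(M_{\omega_{1}},\ldots,M_{\omega_{n}})\|\le \|p\|_{\8}^{\overline{Y}},$$
so $C_{\vec{\omega}}$ is a contraction from $\Po(\overline{Y})$ into $Mult(\mathbf{H})$.

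Since $Y$ is convex and bounded, Example \ref{ad} applies and Proposition \ref{vna}(i) upgrades $C_{\vec{\omega}}$ to a bounded operator from $A(Y)$ into $Mult(\mathbf{H})$. It remains to show that this extension is in fact a contraction. The polynomial bound gives $C_{\vec{\omega}}\B_{\Po(Y)}\subset\Bo_{Mult(\mathbf{H})}$. Since $C_{\vec{\omega}}$ is continuous in the pointwise topologies and $\Bo_{Mult(\mathbf{H})}$ is pointwise closed by (a), this inclusion passes to the pointwise closure: using Example \ref{ad}, $\overline{\B_{\Po(Y)}}^{\Fo(Y)}=\Bo_{\Ho_{\8}(Y)}\supset \Bo_{A(Y)}$, and therefore $C_{\vec{\omega}}\Bo_{A(Y)}\subset \Bo_{Mult(\mathbf{H})}$, which is exactly the contractivity claim.

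The main obstacle is the polynomial bound itself, which is entirely outsourced to the cited classical dilation/functional-calculus results; the mathematical content of the proof is the dictionary $p(M_{\omega_{1}},\ldots,M_{\omega_{n}})=M_{p\circ\vec{\omega}}$ converting the Hilbert space inequality into a statement about multiplier norms, together with the soft pointwise-closure argument that lets the sharp constant $1$ survive the passage from $\Po(\overline{Y})$ to $A(Y)$.
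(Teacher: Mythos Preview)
Your argument is correct and follows exactly the route the paper intends: establish the polynomial inequality via von~Neumann/Ando/Lubin applied to the commuting contractions $M_{\omega_{i}}$, then feed this into Proposition~\ref{vna}(i). One small slip: in your closing pointwise-closure argument you take the closure of $\B_{\Po(Y)}$ in $\Fo(Y)$, but $C_{\vec{\omega}}$ need not be defined on $\Fo(Y)$ since you only know $\vec{\omega}(X)\subset\overline{Y}$; as in the paper's proof of Proposition~\ref{vna}(i), you should work over $\overline{Y}$ (polynomials are norm-dense in $A(Y)\subset\Co(\overline{Y})$, so $\Bo_{A(Y)}\subset\overline{\B_{\Po(\overline{Y})}}^{\Fo(\overline{Y})}$, and $C_{\vec{\omega}}:\Fo(\overline{Y})\to\Fo(X)$ is pointwise continuous), after which your argument goes through verbatim.
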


It was proven in \cite[Theorem 2.5]{erz3} that a non-constant multiplier on a weakly compactly embedded NSCF over a connected space does not attain it's multiplier norm. This leads to the following refinement of  Corollary \ref{ando}.

\begin{corollary}\label{ando2}
Let $\mathbf{H}$ be a $1$-independent HSCF over a connected topological space $X$ and let $\omega_{1},...,\omega_{n}\in \Bo_{Mult\left(\mathbf{H}\right)}$ be non-constants. If $n=1$ or $n=2$, then $C_{\vec{\omega}}$ is a contraction from $\Ho_{\8}\left(\D^{n}\right)$ into $Mult\left(\mathbf{H}\right)$. If $n>2$, then $C_{\vec{\omega}}$ is a contraction from $\Ho_{\8}\left(\sqrt{n}\D^{n}\right)$ into $Mult\left(\mathbf{H}\right)$.
\end{corollary}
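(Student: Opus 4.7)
The plan is to apply Proposition \ref{vna}(ii) with contraction constant $\alpha=1$: the polynomial contraction input is supplied by Corollary \ref{ando}, while the non-attainment theorem cited just before Corollary \ref{ando2} lifts $\vec{\omega}\left(X\right)$ into the open polydisc so that the pullback of $\Ho_{\8}$ is actually defined.

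First I would verify that $\mathbf{H}$ satisfies the hypotheses of Proposition \ref{vna}. The $1$-independence is given, and $\mathbf{H}$ is regular: its point evaluations (the reproducing kernels $K\left(\cdot,x\right)$) span a dense subspace of $\mathbf{H}^{*}\cong\mathbf{H}$, since a function orthogonal to every $K\left(\cdot,x\right)$ vanishes at every $x$ and is therefore zero, so the unnamed proposition immediately following the definition of $\widehat{\mathbf{F}}$ applies. Reflexivity of $\mathbf{H}$ also makes it weakly compactly embedded, so the cited result from \cite{erz3} provides, for every non-constant $\omega_{i}\in\Bo_{Mult\left(\mathbf{H}\right)}$ and every $x\in X$, the strict bound $\left|\omega_{i}\left(x\right)\right|<\|\omega_{i}\|_{Mult\left(\mathbf{H}\right)}\le 1$. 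Hence $\vec{\omega}\left(X\right)\subset\D^{n}$, and \emph{a fortiori} $\vec{\omega}\left(X\right)\subset\sqrt{n}\D^{n}$.

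Set $Y=\D^{n}$ for $n\le 2$ and $Y=\sqrt{n}\D^{n}$ for $n>2$. In either case $Y$ is a bounded convex domain, so Example \ref{ad} gives $\overline{\B_{\Po\left(Y\right)}}^{\Fo\left(Y\right)}=\Bo_{\Ho_{\8}\left(Y\right)}$. Moreover $\vec{\omega}\left(X\right)\subset Y\subset\overline{Y}$, and Corollary \ref{ando} yields $\|C_{\vec{\omega}}p\|_{Mult\left(\mathbf{H}\right)}\le\|p\|_{\8}^{\overline{Y}}$ for every polynomial $p$, so the constant $\alpha$ appearing in the proof of Proposition \ref{vna}(ii) can be taken to equal $1$. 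Running that argument (pointwise continuity of $C_{\vec{\omega}}$ together with pointwise closedness of $\Bo_{Mult\left(\mathbf{H}\right)}$, the latter coming from Proposition \ref{muw2}(i) and regularity of $\mathbf{H}$) gives
$$C_{\vec{\omega}}\Bo_{\Ho_{\8}\left(Y\right)}=C_{\vec{\omega}}\overline{\B_{\Po\left(Y\right)}}^{\Fo\left(Y\right)}\subset\overline{C_{\vec{\omega}}\B_{\Po\left(Y\right)}}^{\Fo\left(X\right)}\subset\Bo_{Mult\left(\mathbf{H}\right)},$$
which says exactly that $C_{\vec{\omega}}:\Ho_{\8}\left(Y\right)\to Mult\left(\mathbf{H}\right)$ is a contraction.

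The main obstacle, and indeed the only non-routine step, is the strict inclusion $\vec{\omega}\left(X\right)\subset Y$. From $\omega_{i}\in\Bo_{Mult\left(\mathbf{H}\right)}$ alone one only obtains $\vec{\omega}\left(X\right)\subset\overline{\D^{n}}$, which is enough to use $A\left(Y\right)$ as in Corollary \ref{ando} but not enough to pull back $\Ho_{\8}\left(Y\right)$, since Proposition \ref{vna}(ii) genuinely requires the image to lie inside the \emph{open} set $Y$. The three hypotheses non-constancy, connectedness of $X$, and weak compact embedding of $\mathbf{H}$ (automatic from reflexivity) are precisely what is needed to invoke the non-attainment theorem of \cite{erz3} and close this gap.
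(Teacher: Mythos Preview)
Your proposal is correct and matches the paper's intended argument: the paper does not give a formal proof, but the sentence immediately preceding the corollary indicates that the refinement of Corollary \ref{ando} comes from the non-attainment result in \cite{erz3}, which forces $\vec{\omega}\left(X\right)$ into the open polydisc so that part (ii) of Proposition \ref{vna} applies with $\alpha=1$. Your verification of regularity via reflexivity of $\mathbf{H}$ and your identification of why the three hypotheses (non-constancy, connectedness, weak compact embedding) are exactly what the non-attainment theorem needs are the expected ingredients; the only minor simplification is that reflexivity alone already yields regularity from the unnamed proposition, so the separate density-of-point-evaluations argument is not needed.
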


\begin{remark}
A direct way to prove the result is using the functional calculus for contractions on Hilbert spaces (see \cite[V.4]{cooper} and \cite{bro}). Moreover, the single variable calculus also implies that $C_{\omega}$ is continuous with respect to the topology of convergence almost everywhere on $\partial\D$ in $\Ho^{\8}$ and the strong operator topology on $Mult\left(\mathbf{H}\right)$.
\qed\end{remark}

In a way, we gave a sufficient condition for a multiplier algebra to be large. Unsurprisingly, it can be modified to get a sufficient condition for non-separability.

\begin{theorem}\label{sep}
Let $\mathbf{H}$ be a $1$-independent HSCF over a connected topological space $X$ and let $\omega\in Mult\left(\mathbf{H}\right)$ be non-constant and such that $\|\omega\|_{Mult\left(\mathbf{H}\right)}=\|\omega\|_{\8}$. Then $Mult\left(\mathbf{H}\right)$ is not separable.
\end{theorem}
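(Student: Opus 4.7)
The plan is to pull back the classical non-separability of $\Ho_{\8}\left(\D\right)$ through the composition-operator machinery supplied by Corollary \ref{ando2}. After rescaling I may assume $\|\omega\|_{\8}=\|\omega\|_{Mult\left(\mathbf{H}\right)}=1$. Since $\mathbf{H}$ is a Hilbert space, hence reflexive and therefore weakly compactly embedded, the non-attainment result quoted in the justification of Corollary \ref{ando2} gives $\left|\omega\left(x\right)\right|<1$ for every $x\in X$, so $\omega\left(X\right)\subset \D$. On the other hand, the equality $\|\omega\|_{\8}=1$ forces the existence of $\left\{x_{n}\right\}_{n\in\N}\subset X$ with $\left|\omega\left(x_{n}\right)\right|\to 1$; setting $z_{n}=\omega\left(x_{n}\right)$ I obtain a sequence in $\D$ whose moduli tend to $1$.

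A standard thinning procedure (selecting $z_{n_{k}}$ with $1-\left|z_{n_{k+1}}\right|\le \rho\left(1-\left|z_{n_{k}}\right|\right)$ for some fixed small $\rho\in\left(0,1\right)$) produces a subsequence that is uniformly separated in the pseudo-hyperbolic metric, and hence interpolating for $\Ho_{\8}\left(\D\right)$ by Carleson's classical theorem. After relabelling I assume $\left\{z_{n}\right\}$ itself is interpolating, so that $g\mapsto\left(g\left(z_{n}\right)\right)_{n\in\N}$ maps $\Ho_{\8}\left(\D\right)$ onto $\ell^{\8}\left(\N\right)$. By the open mapping theorem there is $M\ge 1$ such that for every $\alpha=\left(\alpha_{n}\right)\in\left\{0,1\right\}^{\N}$ one may pick $g_{\alpha}\in M\Bo_{\Ho_{\8}\left(\D\right)}$ with $g_{\alpha}\left(z_{n}\right)=\alpha_{n}$ for every $n$.

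Corollary \ref{ando2} then places $g_{\alpha}\circ\omega$ in $M\Bo_{Mult\left(\mathbf{H}\right)}$. For distinct $\alpha,\beta\in\left\{0,1\right\}^{\N}$, picking $n$ with $\alpha_{n}\ne \beta_{n}$ and using the contractive embedding of $Mult\left(\mathbf{H}\right)$ into $\Co_{\8}\left(X\right)$ yields
$$\left\|g_{\alpha}\circ\omega-g_{\beta}\circ\omega\right\|_{Mult\left(\mathbf{H}\right)}\ge \left\|g_{\alpha}\circ\omega-g_{\beta}\circ\omega\right\|_{\8}\ge \left|g_{\alpha}\left(z_{n}\right)-g_{\beta}\left(z_{n}\right)\right|=1.$$
Thus $\left\{g_{\alpha}\circ\omega\left|\alpha\in\left\{0,1\right\}^{\N}\right.\right\}$ is a family of cardinality $2^{\aleph_{0}}$ in $Mult\left(\mathbf{H}\right)$ with pairwise distances at least $1$, which rules out separability. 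The main outside ingredient, and the only step that uses machinery beyond what is developed earlier in the paper, is Carleson's interpolation theorem, which converts the boundary accumulation of $\omega\left(X\right)$ into an $\ell^{\8}\left(\N\right)$-quotient of $\Ho_{\8}\left(\D\right)$; the remainder is a direct application of Corollary \ref{ando2} together with the contractive embedding $Mult\left(\mathbf{H}\right)\subset \Co_{\8}\left(X\right)$.
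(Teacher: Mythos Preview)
Your proof is correct and follows essentially the same route as the paper: rescale, use non-attainment to get $\omega\left(X\right)\subset\D$ with boundary accumulation, apply Carleson's interpolation theorem to a thinned sequence to produce an uncountable $1$-separated family in $\Ho_{\8}\left(\D\right)$ (relative to $\|\cdot\|_{\8}^{\omega\left(X\right)}$), and then push this family into $Mult\left(\mathbf{H}\right)$ via Corollary~\ref{ando2} together with $\|\cdot\|_{Mult\left(\mathbf{H}\right)}\ge\|\cdot\|_{\8}$. The only cosmetic difference is that the paper isolates the Carleson step as Lemma~\ref{sep1} on the separability of $\Ho^{\8}_{Y}$, whereas you carry it out inline.
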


Note that a reflexive NSCF over a separable topological space is separable, and so if under the assumptions of the theorem $X$ is separable, $Mult\left(\mathbf{H}\right)$ is not reflexive.

Before proving the theorem, let us consider an auxiliary object. If $Y\subset \D$ has a limit point in $\D$, then $\Ho^{\8}_{Y}=\left(\Ho^{\8}, \|\cdot\|_{\8}^{Y}\right)$ is a normed space. For a discussion on when it is a NSHF or a Banach space see \cite{ab} (see also \cite{bv}), while here we will focus on separability. Namely, let us prove the following result.

\begin{lemma}\label{sep1}
$\Ho^{\8}_{Y}$ is separable if and only if $\overline{Y}\subset \D$.
\end{lemma}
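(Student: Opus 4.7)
The plan is to prove the two implications separately, with the easy direction relying on a standard compactness argument in the compact-open topology, and the hard direction using Carleson's interpolation theorem for $H^{\infty}(\mathbb{D})$.

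\textbf{Easy direction.} Assume $\overline{Y}\subset\D$, so that $\overline{Y}$ is a compact subset of $\D$. Since every $f\in\Ho^{\8}$ is continuous on $\D$, we have $\|f\|_{\8}^{Y}=\|f\|_{\8}^{\overline{Y}}$, and the restriction map $f\mapsto f|_{\overline{Y}}$ is an isometric embedding $\Ho^{\8}_{Y}\hookrightarrow \Co\left(\overline{Y}\right)$. As $\overline{Y}$ is a compact metrizable space, $\Co\left(\overline{Y}\right)$ is separable, and hence so is $\Ho^{\8}_{Y}$. (Equivalently, the Taylor polynomials of $f\in\Ho^{\8}$ converge to $f$ uniformly on compact subsets of $\D$, in particular on $\overline{Y}$, so polynomials with Gaussian rational coefficients form a countable dense subset.)

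\textbf{Hard direction.} I prove the contrapositive: if $\overline{Y}\not\subset\D$, then $\Ho^{\8}_{Y}$ is not separable. Pick $\zeta\in\overline{Y}\cap\partial\D$ and a sequence $\left\{y_{n}\right\}_{n\in\N}\subset Y$ with $y_{n}\to\zeta$; in particular $|y_{n}|\to 1$. Any such sequence admits a subsequence (still denoted $\left\{y_{n}\right\}_{n\in\N}$) that is an interpolating sequence for $\Ho^{\8}\left(\D\right)$, i.e.~satisfies the Carleson condition $\inf_{n}\prod_{k\neq n}\left|\frac{y_{n}-y_{k}}{1-\overline{y_{k}}y_{n}}\right|>0$; this is a routine inductive extraction, since pseudohyperbolic distances of points near the boundary to any fixed finite collection can be made arbitrarily close to $1$.

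\textbf{Applying Carleson.} By Carleson's interpolation theorem, the evaluation map $\Phi:\Ho^{\8}\to \ell^{\8}$, $\Phi\left(f\right)=\left(f\left(y_{n}\right)\right)_{n\in\N}$, is surjective. Moreover, from $\left\{y_{n}\right\}_{n\in\N}\subset Y$ we have $\|\Phi\left(f\right)\|_{\8}=\sup_{n}\left|f\left(y_{n}\right)\right|\le\|f\|_{\8}^{Y}$, so $\Phi:\Ho^{\8}_{Y}\to\ell^{\8}$ is a continuous linear surjection. A continuous image of a separable space is separable, but $\ell^{\8}$ is not separable, so $\Ho^{\8}_{Y}$ is not separable either. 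The main obstacle is the invocation of Carleson's theorem together with the subsequence extraction; once these are granted, the non-separability transfers through the trivial bound $\|\Phi\left(f\right)\|_{\8}\le\|f\|_{\8}^{Y}$.
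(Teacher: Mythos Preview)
Your proof is correct and follows essentially the same strategy as the paper: for sufficiency you approximate by polynomials on a compact subset of $\D$ (the paper routes this through $A\left(D\right)$ for a smaller disk $D$, you through $\Co\left(\overline{Y}\right)$), and for necessity you extract an interpolating subsequence from $Y$ near the boundary, invoke Carleson's theorem to surject onto $\ell^{\8}$, and conclude non-separability. The only cosmetic differences are that the paper selects the subsequence via the explicit geometric condition $1-\left|y_{n+1}\right|\le\tfrac{1}{2}\left(1-\left|y_{n}\right|\right)$ and then exhibits the uncountable $1$-separated family $\left\{f_{N}\right\}_{N\subset\N}$ directly, whereas you phrase the conclusion as ``continuous image of separable is separable.''
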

\begin{proof}
Sufficiency: If $\overline{Y}\subset \D$, then $Y\subset D$, where $D\subset\D$ is a smaller disk. As was mentioned in Example \ref{ad}, the polynomials are dense in $A\left(D\right)$, and since $\Ho^{\8}\subset A\left(D\right)$ with $\|\cdot\|_{\8}^{Y}\le \|\cdot\|_{\8}^{D}$, the polynomials are dense in $\Ho^{\8}_{Y}$. Hence, the latter is separable.\medskip

Necessity: If $\overline{Y}\cap\partial\D\ne\varnothing$ inductively construct the following sequence of elements of $Y$. Take any $y_{1}\in Y$ and assume that $y_{1},...,y_{n}$ are already chosen. Since there are elements in $Y$ arbitrarily close to $\partial\D$ there is $y_{n+1}\in Y$ such that $1-\left|y_{y+1}\right|\le \frac{1}{2}\left(1-\left|y_{n}\right|\right)$. Then, from the Carleson's interpolation theorem (combine \cite[theorems 9.1 and 9.2]{duren}) the restriction operator $\Ho^{\8}\ni f\to \left\{f\left(y_{n}\right)\right\}_{n\in\N}\in l^{\8}$ is a bijection. Hence, for any $N\subset \N$ there is $f_{N}\in \Ho^{\8}$ such that $f_{N}\left(y_{n}\right)=1$, when $n\in N$, and $f_{N}\left(y_{n}\right)=0$, otherwise. Therefore, the set $\left\{f_{N},~N\subset\N\right\}$ has cardinality continuum, and $\|f_{N}-f_{M}\|_{\8}^{Y}\ge 1$, for any distinct $M,N\subset \N$. Thus, $\Ho^{\8}_{Y}$ cannot be separable.
\end{proof}

\begin{proof}[Proof of Theorem \ref{sep}]
Without loss of generality we may assume that $\|\omega\|_{Mult\left(\mathbf{H}\right)}=1=\|\omega\|_{\8}$, and since $\omega$ is not a constant, $Y=\omega\left(X\right)\subset \D$. Since $\|\omega\|_{\8}=1$ it follows that $\overline{Y}\cap \partial\D\ne\varnothing$, and so from the preceding lemma there is a collection $\left\{g_{t}\right\}_{t\in\R}\subset \Ho^{\8}$ such that $\|g_{t}-g_{s}\|_{\8}^{A}\ge 1$, for every distinct $t,s\in\R$. From Corollary \ref{ando2}, $C_{\omega}\Ho^{\8}\subset Mult\left(\mathbf{H}\right)$, and so $g_{t}\circ\omega\in Mult\left(\mathbf{H}\right)$, for every $t\in\R$. Moreover, $$\|g_{t}\circ\omega-g_{s}\circ\omega\|_{ Mult\left(\mathbf{H}\right)}\ge\|g_{t}\circ\omega-g_{s}\circ\omega\|_{\8}^{X}=\|g_{t}-g_{s}\|_{\8}^{Y}\ge 1,$$ for distinct $t,s\in\R$, from where $Mult\left(\mathbf{H}\right)$ is non-separable.
\end{proof}

Note that the assumption of the theorem do not imply that $Mult\left(\mathbf{H}\right)$ is  a closed subalgebra of $\Ho_{\8}\left(X\right)$.

\begin{example}
Let $\mathbf{H}$ be a HSHF over $X=\B_{\C^{2}}$ whose reproducing kernel is $K\left(x,y\right)=\frac{1}{1-x_{1}\overline{y_{1}}-x_{2}\overline{y_{2}}}$, where $x=\left(x_{1},x_{2}\right)$ and $y=\left(y_{1},y_{2}\right)$. Let $\omega_{i}:X\to \D$ be defined by $\omega_{i}\left(x_{1},x_{2}\right)=x_{i}$, $i=1,2$. For $x,y\in X$ we have $$\left(1-\omega_{1}\left(x\right)\overline{\omega_{1}\left(y\right)}\right)K\left(x,y\right)=\frac{1-x_{1}\overline{y_{1}}}{1-x_{1}\overline{y_{1}}-x_{2}\overline{y_{2}}}=\frac{1}{1-\frac{x_{2}\overline{y_{2}}}{1-x_{1}\overline{y_{1}}}}=\frac{1}{1-L\left(x,y\right)},$$
where $L=\frac{\omega_{2}\otimes \overline{\omega_{2}}}{1-\omega_{1}\otimes \overline{\omega_{1}}}$ is a kernel with $\left|L\right|<1$ (see Example \ref{rkhs}). Hence, $\left(1-\omega_{1}\otimes\overline{\omega_{1}}\right)K$ is a kernel, and so $\|\omega_{1}\|_{Mult\left(\mathbf{H}\right)}=1$. Analogously, $\|\omega_{2}\|_{Mult\left(\mathbf{H}\right)}=1$. On the other hand, if $Mult\left(\mathbf{H}\right)$ is isomorphic to a subalgebra of $\Ho_{\8}\left(X\right)$, then since it contains $\omega_{i}$, according to Remark \ref{vna1}, $\Ho_{\8}\left(X\right)=Mult\left(\mathbf{H}\right)$. However, the multiplier algebra of $\mathbf{H}$ is not $\Ho_{\8}\left(X\right)$ (see \cite[Remark 8.9]{am}). Contradiction.
\qed\end{example}

The theorem begs the following question:

\begin{question}
Let $\mathbf{H}$ be a $1$-independent HSCF over a connected topological space $X$ such that $Mult\left(\mathbf{H}\right)$ is not separable. Is there always a non-constant  $\omega\in Mult\left(\mathbf{H}\right)$ such that $\|\omega\|_{Mult\left(\mathbf{H}\right)}=\|\omega\|_{\8}$?
\end{question}

\section{Acknowledgment}

The author wants to thank Yemon Choi who contributed Example \ref{ardy}, Jochen Wengenroth and Giorgio Metafune, who contributed to the proof of Lemma \ref{sep1}, and the service \href{mathoverflow.com/}{MathOverflow} which made it possible.

\begin{bibsection}
\begin{biblist}

\bib{am}{book}{
   author={Agler, Jim},
   author={McCarthy, John E.},
   title={Pick interpolation and Hilbert function spaces},
   series={Graduate Studies in Mathematics},
   volume={44},
   publisher={American Mathematical Society, Providence, RI},
   date={2002},
   pages={xx+308},
}

\bib{ab}{article}{
   author={Arcozzi, Nicola},
   author={Bj\"orn, Anders},
   title={Dominating sets for analytic and harmonic functions and
   completeness of weighted Bergman spaces},
   journal={Math. Proc. R. Ir. Acad.},
   volume={102A},
   date={2002},
   number={2},
   pages={175--192},
}

\bib{ae}{article}{
   author={Arens, Richard F.},
   author={Eells, James, Jr.},
   title={On embedding uniform and topological spaces},
   journal={Pacific J. Math.},
   volume={6},
   date={1956},
}

\bib{ah}{article}{
   author={Argyros, Spiros A.},
   author={Haydon, Richard G.},
   title={A hereditarily indecomposable $\scr L_\infty$-space that solves
   the scalar-plus-compact problem},
   journal={Acta Math.},
   volume={206},
   date={2011},
   number={1},
   pages={1--54},
}

\bib{barb}{article}{
   author={Barbian, Christoph},
   title={A characterization of multiplication operators on reproducing
   kernel Hilbert spaces},
   journal={J. Operator Theory},
   volume={65},
   date={2011},
   number={2},
   pages={235--240},
}

\bib{bartle}{article}{
   author={Bartle, Robert G.},
   title={On compactness in functional analysis},
   journal={Trans. Amer. Math. Soc.},
   volume={79},
   date={1955},
   pages={35--57},
}

\bib{erz1}{article}{
   author={Bilokopytov, Eugene},
   title={Continuity and Holomorphicity of Symbols of Weighted Composition
   Operators},
   journal={Complex Anal. Oper. Theory},
   volume={13},
   date={2019},
   number={3},
   pages={1441--1464},
}

\bib{erz2}{article}{
   author={Bilokopytov, Eugene},
   title={Which multiplication operators are surjective isometries?},
   journal={J. Math. Anal. Appl.},
   volume={480},
   date={2019},
   number={1},
}

\bib{erz3}{article}{
   author={Bilokopytov, Eugene},
   title={Maximum Modulus Principle For Multipliers and Mean Ergodic Multiplication Operators},
   journal={\href{https://arxiv.org/abs/2007.11685}{arXiv:2007.11685}},
   date={2020},
}		

\bib{bo}{article}{
   author={Bonet, Jos\'e},
   title={Every separable complex Frechet space with a continuous norm is isomorphic to a space of holomorphic functions},
   journal={Canadian Mathematical Bulletin},
   date={2020},
   pages={1--5},
}

\bib{bv}{article}{
   author={Bonet, Jos\'e},
   author={Vukoti\'c, Dragan},
   title={A note on completeness of weighted normed spaces of analytic
   functions},
   journal={Results Math.},
   volume={72},
   date={2017},
   number={1-2},
   pages={263--279},
}

\bib{bor}{article}{
   author={Boyd, Christopher},
   author={Rueda, Pilar},
   title={The biduality problem and M-ideals in weighted spaces of
   holomorphic functions},
   journal={J. Convex Anal.},
   volume={18},
   date={2011},
   number={4},
   pages={1065--1074},
}

\bib{bro}{article}{
   author={Briem, E.},
   author={Davie, A. M.},
   author={\O ksendal, B. K.},
   title={A functional calculus for pairs of commuting contractions},
   journal={J. London Math. Soc. (2)},
   volume={7},
   date={1974},
   pages={709--718},
}

\bib{cooper}{book}{
   author={Cooper, J. B.},
   title={Saks spaces and applications to functional analysis},
   series={North-Holland Mathematics Studies},
   volume={139},
   edition={2},
   note={Notas de Matem\'atica [Mathematical Notes], 116},
   publisher={North-Holland Publishing Co., Amsterdam},
   date={1987},
   pages={x+372},
}

\bib{dgg}{article}{
   author={Davie, A. M.},
   author={Gamelin, T. W.},
   author={Garnett, J.},
   title={Distance estimates and pointwise bounded density},
   journal={Trans. Amer. Math. Soc.},
   volume={175},
   date={1973},
   pages={37--68},
}

\bib{ds}{book}{
   author={Dunford, Nelson},
   author={Schwartz, Jacob T.},
   title={Linear Operators. I. General Theory},
   series={With the assistance of W. G. Bade and R. G. Bartle. Pure and
   Applied Mathematics, Vol. 7},
   publisher={Interscience Publishers, Inc., New York; Interscience
   Publishers, Ltd., London},
   date={1958},
   pages={xiv+858},
}

\bib{duren}{book}{
   author={Duren, Peter L.},
   title={Theory of $H^{p}$ spaces},
   series={Pure and Applied Mathematics, Vol. 38},
   publisher={Academic Press, New York-London},
   date={1970},
   pages={xii+258},
}

\bib{engelking}{book}{
    AUTHOR = {Engelking, Ryszard},
     TITLE = {General topology},
    SERIES = {Sigma Series in Pure Mathematics},
    VOLUME = {6},
   EDITION = {Second},
      NOTE = {Translated from the Polish by the author},
 PUBLISHER = {Heldermann Verlag, Berlin},
      YEAR = {1989},
     PAGES = {viii+529},
}

\bib{gn}{article}{
   author={Gauthier, P. M.},
   author={Nestoridis, V.},
   title={Density of polynomials in classes of functions on products of
   planar domains},
   journal={J. Math. Anal. Appl.},
   volume={433},
   date={2016},
   number={1},
   pages={282--290},
}

\bib{grot}{book}{
   author={Grothendieck, Alexander},
   title={Topological vector spaces},
   note={Translated from the French by Orlando Chaljub;
   Notes on Mathematics and its Applications},
   publisher={Gordon and Breach Science Publishers, New York-London-Paris},
   date={1973},
   pages={x+245},
}

\bib{lt}{book}{
   author={Lindenstrauss, Joram},
   author={Tzafriri, Lior},
   title={Classical Banach spaces. I},
   note={Sequence spaces;
   Ergebnisse der Mathematik und ihrer Grenzgebiete, Vol. 92},
   publisher={Springer-Verlag, Berlin-New York},
   date={1977},
   pages={xiii+188},
}

\bib{lubin}{article}{
   author={Lubin, Arthur},
   title={Research notes on von Neumann's inequality},
   journal={Internat. J. Math. Math. Sci.},
   volume={1},
   date={1978},
   number={1},
   pages={133--135},
}

\bib{mr}{article}{
   author={Mashreghi, Javad},
   author={Ransford, Thomas},
   title={Linear polynomial approximation schemes in Banach holomorphic
   function spaces},
   journal={Anal. Math. Phys.},
   volume={9},
   date={2019},
   number={2},
   pages={899--905},
}

\bib{pisier}{book}{
   author={Pisier, Gilles},
   title={Similarity problems and completely bounded maps},
   series={Lecture Notes in Mathematics},
   volume={1618},
   edition={Second, expanded edition},
   note={Includes the solution to ``The Halmos problem''},
   publisher={Springer-Verlag, Berlin},
   date={2001},
   pages={viii+198},
}

\bib{range}{book}{
   author={Range, R. Michael},
   title={Holomorphic functions and integral representations in several
   complex variables},
   series={Graduate Texts in Mathematics},
   volume={108},
   publisher={Springer-Verlag, New York},
   date={1986},
   pages={xx+386},
}

\bib{scheidemann}{book}{
   author={Scheidemann, Volker},
   title={Introduction to complex analysis in several variables},
   publisher={Birkh\"auser Verlag, Basel},
   date={2005},
   pages={viii+171},
}

\bib{wada}{article}{
    AUTHOR = {Wada, Junzo},
     TITLE = {Weakly compact linear operators on function spaces},
   JOURNAL = {Osaka Math. J.},
    VOLUME = {13},
      YEAR = {1961},
     PAGES = {169--183},
}

\end{biblist}
\end{bibsection}

\end{document}